\documentclass[14pt, twoside, a4paper]{article}
\usepackage{amsmath}
\usepackage{tikz}
\usepackage{graphicx}
\usepackage{subfigure}
\usepackage{calc}
\usepackage{float}
\usepackage{hyperref}
            
\ifx\TestIngCommand\undefined\relax                                     
\else\input ../../../proc/ppreamb-book.tex                               
\input init.tex \fi                                                     
\let\TestIngCommand\undefined                                             

\newtheorem{question}{Question}
\newtheorem{remark}{Remark}

\usepackage{amsmath,amscd,amssymb}
\usepackage{graphics}
\newtheorem{theo}{Theorem} 
\newtheorem{lem}{Lemma}
\newtheorem{cor}{Corollary}
\newtheorem{prop}{Proposition}
\newtheorem{defi}{Definition}
\newtheorem{proof}{Proof}

\newskip\ttglue\ttglue=.5em plus.25em minus.15em
\def\firstname#1{\def\FIRSTNAME{#1}\ignorespaces}                  
\def\lastname#1{\def\LASTNAME{#1}\ignorespaces}
\def\middleinitial#1{\def\MIDDLEINI{#1}\ignorespaces}
\def\department#1{\def\DEPARTMENT{#1}\ignorespaces}
\def\institute#1{\def\INSTITUTE{#1}\ignorespaces}
\def\address#1{\def\ADDRESS{#1}\ignorespaces}
\def\country#1{\def\COUNTRY{#1}\ignorespaces}
\def\otheraffiliation#1{\def\OTHERAFFILIATION{#1}\ignorespaces}
\def\email#1{\def\EMAIL{#1}\ignorespaces}
\newcount\autcount\autcount=0
\newcount\affcount\affcount=0
\newcount\numcount\newcount\nummcount
\newcount\nummmcount\newcount\nummmmcount
\def\writename#1#2{\ \kern-1ex\hbox{
  \csname AUthor\the#1\endcsname\
  \edef\TESTSTR{}\expandafter\ifx\csname auTHor\the#1\endcsname\TESTSTR
  \else\csname auTHor\the#1\endcsname.\ \fi 
  \csname authOR\the#1\endcsname$^{\csname AFF\the#1\endcsname}$
  \expandafter\ifx\csname corr\number#1\endcsname\relax
  \else\thanks{Corresponding author.}\ \fi
  }\ifnum#1<#2, \else\ \kern-1ex\fi}
\def\writeemail#1{
  \nummcount=0\relax\nummmcount=0\relax
  \loop\ifnum\nummcount<\autcount\advance\nummcount by1\relax
    {\expandafter\ifnum\csname AFF\the\nummcount\endcsname=#1\relax
    \global\advance\nummmcount by1\fi}\repeat
  \nummcount=0\relax\nummmmcount=0\relax
  \loop\ifnum\nummcount<\autcount\advance\nummcount by1\relax
    {\expandafter\ifnum\csname AFF\the\nummcount\endcsname=#1\relax
    \global\advance\nummmmcount by1\relax\def\blank{}\expandafter
    \ifx\csname EMAIL\the\nummcount\endcsname\blank(no e-mail)
    \else\csname EMAIL\the\nummcount\endcsname
    \fi 
    \ifnum\nummmmcount<\nummmcount; \fi\fi}\repeat}
\long\def\BeginAuthorList#1\EndAuthorList{#1\relax
  \author{\vbox{\hsize=390pt\noindent\numcount=0\relax
    \loop\ifnum\numcount<\autcount\advance\numcount by1\relax
      \writename{\numcount}{\autcount}
      \repeat}\\[2mm]
    \vbox{\small\numcount=0\relax
      \loop\ifnum\numcount<\affcount\advance\numcount by1\relax
        \vbox{{\count0=\numcount\relax
          \loop\expandafter\ifnum\csname AFF\the\count0\endcsname
            <\numcount\relax\advance\count0 by1\relax\repeat
          $^{\csname AFF\the\count0\endcsname}$}
        \def\BLANK{}\expandafter\ifx\csname DEPT\the\numcount\endcsname
          \BLANK
          \else\csname DEPT\the\numcount\endcsname, \fi
        \csname INST\the\numcount\endcsname,
        \csname ADDR\the\numcount\endcsname,
        \csname COUN\the\numcount\endcsname
        \edef\TEST{}\expandafter\ifx\csname OTHE\the\numcount\endcsname
          \TEST
          .\else;\break\csname OTHE\the\numcount\endcsname.\fi}
        \vbox{\writeemail{\numcount}}
        \repeat}\\}}
\expandafter\def\csname x1\endcsname{}
\expandafter\def\csname x2\endcsname{}
\expandafter\def\csname x3\endcsname{}
\expandafter\def\csname x4\endcsname{}
\expandafter\def\csname x5\endcsname{}
\expandafter\def\csname x6\endcsname{}
\expandafter\def\csname x7\endcsname{}
\expandafter\def\csname x8\endcsname{}
\expandafter\def\csname x9\endcsname{}
\def\Author#1#2{\global\advance\autcount by1\relax#2
  \expandafter\edef\csname AUthor\the\autcount\endcsname{\FIRSTNAME}
  \expandafter\edef\csname auTHor\the\autcount\endcsname{\MIDDLEINI}
  \expandafter\edef\csname authOR\the\autcount\endcsname{\LASTNAME}
  \expandafter\edef\csname EMAIL\the\autcount\endcsname{\EMAIL}
  \let\tempera\"\def\"{\string\"}\expandafter\ifx\csname x\DEPARTMENT
    \endcsname\relax
    \global\advance\affcount by1\relax\let\"\tempera
    \expandafter\edef\csname DEPT\the\affcount\endcsname{\DEPARTMENT}
    \expandafter\edef\csname INST\the\affcount\endcsname{\INSTITUTE}
    \expandafter\edef\csname ADDR\the\affcount\endcsname{\ADDRESS}
    \expandafter\edef\csname COUN\the\affcount\endcsname{\COUNTRY}
    \expandafter\edef\csname OTHE\the\affcount\endcsname{\OTHERAFFILIATION}
    \expandafter\edef\csname AFF\the\autcount\endcsname{\the\affcount}
  \else\expandafter\edef\csname AFF\the\autcount\endcsname{\DEPARTMENT}
  \fi\let\"\tempera\ignorespaces}
\def\CorrespondingAuthor#1#2{
  \expandafter\xdef\csname corr\number#1\endcsname{cor}
  \Author#1{#2}}
\def\PaperTitle#1{\title{\bf#1}}
\def\Category#1{\ignorespaces}
\def\keywords#1{{\noindent \emph{Keywords:}
  \def\BLANK{}\def\TEST{#1}\ifx\BLANK\TEST(n/a).\else#1\fi}}
\begin{document}
\large{                                                          
\PaperTitle{Topological and Diophantine properties of
lattice subset projections} 

\Category{(Pure) Mathematics}

\date{}

\BeginAuthorList
\Author1{
\firstname{Wayne}
    \lastname{Lawton}
    \middleinitial{M}
    \department{Department of the Theory of Functions, 
Institute of Mathematics and Computer Science}
    \institute{Siberian Federal University}
    \institute{Siberian Federal University}
    \otheraffiliation{}
   \address{Krasnoyarsk}
    \country{Russian Federation}
    \email{wlawton@gmail.com}}
\EndAuthorList
\maketitle
\tableofcontents
\thispagestyle{empty}
\begin{abstract}
\noindent Fix $1 \leq n < m, k = m-n.$ The Grassmannian $Gr(n,m)$ 
is a compact $kn$-dimensional manifold with a unique rotation 
invariant probability measure $\sigma_n.$ For
$W \in Gr(n,m),$ 
$P_W : \mathbb R^m \mapsto W$ is orthogonal projection.
A lattice subset $L \subset \mathbb Z^m \subset \mathbb R^m$
is called $k$-dense  if it intersects 
$C(O) := \bigcup_{V \in O} V\backslash \{0\}$ 
for every nonempty open $O \subset Gr(k,m).$ 
We use Baire's category theorem \cite{baire} to prove
that $L$ is $k$-dense iff
$L_{n,lim} := \{W \in Gr(n,m) : 0 \mbox{ is a limit point of } P_W(L) \}$ 
is a $G_\delta$ set.
We use Khintchine–Groshev's theorem 
\cite{beresnevich,groshev,khintvhine}
to characterize Diophantine properties of 
$L_{n,lim}$ by lacunary properties of $L$ 
and construct $k$-dense
$L$ with $\sigma_n(L_{n,lim}) = 0$ 
and  with $\sigma_n(L_{n,lim}) = 1.$
We pose related questions about the 
construction of multidimensional 
crystalline measures and Fourier quasicrystals.
\end{abstract}
\tableofcontents
\noindent{\bf 2020 Mathematics Classification}
11P21; 
54E52; 
11J83 

\footnote{\thanks{This work is supported by the Krasnoyarsk Mathematical Center and financed by the 
Ministry of Science and Higher Education of the Russian Federation (Agreement No. 075-02-2026-1314).}}
\section{Introduction and Preliminary Results}\label{sec1}
This paper was motivated by mathematical structures whose spectra are projections of lattice subsets. In \cite{meyer2} Yves Meyer studied Diohpantine properties of physical quasicrystals and aperiodic tilings including thos of Penrose. Their spectra are dense. In \cite{meyer1} he introduced crystalline measures, whose spectra are discrete, to study Guinand's work on
the Riemann hypothesis. 
(\cite{guinand}, Example 4, p. 264) constructs a crstalline measire from the critical zeros of a pair of related L-functions. Fourier quasicrystals, explained in \S4, are crystalline measures whose spectra are exceptionally sparse. 
The contents of \S1-3 are independent of \S4 and have  intrinsic interest similar to the content
of Oxtoby's book \cite{oxtoby} titled Measure and Category.
\newline
\\
This paper follows notation in the abstract.  
$:=$ means is defined to be,
iff $:=$ if and only if.
$\mathbb Z, \mathbb Z_+,\mathbb Q, \mathbb R,
\mathbb R_+, \mathbb C, 
\mathbb T := \mathbb R/\mathbb Z$ are  
integers, positive integers, rationals, reals, 
positive reals, and circle group.
For every lattice subset $L \subset \mathbb Z^m,$
$L_{n,disc} := \{W \in Gr(n,m) : P_W(L) \mbox{ is discrete}\}.$
For sets $X, Y,$ 
$|X| :=$ cardinality of $X$ and 
$X \backslash Y := \{x \in X  : x \notin Y\}.$
\newline
$\mathbb R^m :=$ Euclidean space 
consisting of column vectors with
origin $0 := [0,...,0]^T,$ where $^T :=$ transpose, with
scalar product $(\cdot,\cdot)$ and norm
$|\cdot| := \sqrt {(\cdot,\cdot)}.$ 
\newline
$C(\mathbb R^n) :=$ set of continuous $f : \mathbb R^n \to \mathbb C.$ 
$B(x,r) := \{y \in \mathbb R^m : |y-x| < r\},$ 
$S(x,r) := \{r \in \mathbb R^m : |y-x| = r\}.$ 
For $W \in Gr(n,m),$ its orthogonal complement 
$W^\perp := \mbox{ kernel } P_W \in Gr(k,m).$
$d(u,W) := \inf_{w \in W} |u-w|= |u - P_W(u)| =  |P_{W^\perp}(u)|.$
For a set $X$ and $a, b\in \mathbb Z_+$
$\mathbb R^{a \times b}$  is the set of 
$a$ by $b$ real matrices identified with
linear maps $\mathbb R^b \to \mathbb R^a.$
The unimodular group
$GL(m, \mathbb Z) := 
\{M \in \mathbb Z^{m \times m} : \det M = \pm 1\}.$
For $S \subset \mathbb R^m,$ $\mathbb R(S) :=$ subspace spanned by $S$ and  $\mathbb Z(S) :=$ subgroup generated by $S.$
\subsection{Free Abelian Groups}\label{sub1.1}
\begin{defi}\label{def1.1.1}
A subset $\mathcal B$ of an abelian group $A$ is a basis if every 
$a \in A$ determines a unique finitely supported
$c _a : \mathcal B \mapsto \mathbb Z$ such that
\begin{equation}
	a = \sum_{\beta \in \mathcal B} c_a(\beta) \, \beta.
\end{equation}
$A$ is called free if it has a basis. 
\end{defi}
\begin{lem}\label{lem1.1.1}
	All bases of a free abelian group have the same cardinality, called its rank. Free abelian groups are isomorphic iff they have the same rank.
\end{lem}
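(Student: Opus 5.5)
The plan is to reduce everything to vector spaces over a field, where dimension is already known to be well defined. Given a basis $\mathcal B$ of $A$, the definition makes $A \cong \bigoplus_{\beta \in \mathcal B} \mathbb Z$, with $a \mapsto c_a$. Hence
\[
A/2A \cong \bigoplus_{\beta \in \mathcal B} \mathbb Z/2\mathbb Z ,
\]
and the images of the elements of $\mathcal B$ form a basis of $A/2A$ as a vector space over the field $\mathbb F_2 = \mathbb Z/2\mathbb Z$. To check this, note that every class is an $\mathbb F_2$-combination of these images. Moreover, if $\sum c(\beta)\beta \in 2A$, then uniqueness of coefficients forces every $c(\beta)$ to be even. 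Since the quotient $A/2A$ depends only on $A$ and not on $\mathcal B$, this shows $|\mathcal B| = \dim_{\mathbb F_2} A/2A$ for every basis $\mathcal B$. This step is independent of any choice.

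It remains to know that $\dim_{\mathbb F_2}$ is well defined. This is the main point, and I regard it as the only real obstacle.

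\emph{Finite case.} If some basis of $A/2A$ is finite, I would use the Steinitz exchange lemma: any linearly independent set has size at most that of any spanning set. Alternatively, a finite vector space $V$ with a basis of size $r$ has exactly $2^r$ elements, so $r = \log_2 |V|$ is determined by $V$.

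\emph{Infinite case.} Suppose $\mathcal B$ and $\mathcal B'$ are bases and $\mathcal B$ is infinite. Each $\beta' \in \mathcal B'$ is a combination of a finite subset $F_{\beta'} \subset \mathcal B$, and the union $\bigcup_{\beta'} F_{\beta'}$ must be all of $\mathcal B$ by independence. Therefore $|\mathcal B| \le |\mathcal B'| \cdot \aleph_0 = |\mathcal B'|$; here $\mathcal B'$ is infinite, since by the finite case a finite $\mathcal B'$ would force $\mathcal B$ to be finite too. By symmetry the two cardinalities are equal, with Schröder–Bernstein used tacitly.

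For the second assertion, first suppose $\phi : A \to A'$ is an isomorphism and $\mathcal B$ is a basis of $A$. Then $\phi(\mathcal B)$ is a basis of $A'$, because $\phi$ transports unique finite expansions, so the ranks agree. Conversely, suppose $\mathcal B$ and $\mathcal B'$ are bases with a bijection $f : \mathcal B \to \mathcal B'$. Define
\[
\phi(a) := \sum_{\beta} c_a(\beta) f(\beta).
\]
Uniqueness of the coefficient functions gives $c_{a+b} = c_a + c_b$, so $\phi$ is a homomorphism. The map built in the same way from $f^{-1}$ is a two-sided inverse. Hence $A \cong A'$.
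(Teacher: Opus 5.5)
Your proof is correct and complete. The paper gives no argument of its own: its proof is only the citation to Theorem 1.2 of Hungerford. What you have written is therefore a self-contained replacement, not a variant of something in the text.

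Your finite case, reducing to the $\mathbb F_2$-space $A/2A$ and counting $|A/2A| = 2^r$ (or using Steinitz), is the standard argument.

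For infinite bases, the usual textbook shortcut is different. One observes that $A$ is the set of finitely supported functions $\mathcal B \to \mathbb Z$, so $|A| = |\mathcal B|$ whenever $\mathcal B$ is infinite. Every infinite basis then has cardinality $|A|$. This is shorter, but it relies on $\mathbb Z$ being countable. Your support-union argument avoids that and works verbatim for free modules over any nonzero ring. Two small refinements:
\begin{itemize}
\item It can be run in $A$ itself, without passing to $A/2A$.
\item It does not need the finite case to show that $\mathcal B'$ is infinite, since $\mathcal B = \bigcup_{\beta' \in \mathcal B'} F_{\beta'}$ would be a finite union of finite sets if $\mathcal B'$ were finite.
\end{itemize}

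Your treatment of the isomorphism criterion is exactly what is needed: isomorphisms carry bases to bases, and a bijection of bases is extended through the unique coefficient functions, with the inverse built from $f^{-1}$.
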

\begin{proof}
Theorem 1.2 in \cite{hungerford} .
\end{proof}
Every base of $\mathbb Z^m$ consists of the columns of 
a matrix in $GL(m,\mathbb Z).$ Richard Dedekind first proved that every subgroup of a free abelian group is free.  A constructive proof is obtained by computing the  Smith normal form for integer matrices \cite{smith}, \cite{stanley}.
\begin{prop}\label{prop1.1.1}  
For every subgroup $C$  of $\mathbb Z^m$ there
exists a basis $\mathcal B := \{b_1,...,b_m\}$ of 
$\mathbb Z^m$ and 
$c_1,...,c_n \in \mathbb Z_+$
such that
$\mathcal C := \{c_1b_1,...,c_tb_t\}$  
is a basis for $C.$
\end{prop}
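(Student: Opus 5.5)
The plan is to deduce the statement from the Smith normal form, as the paper suggests. (The indices $c_1,\dots,c_n$ in the statement should read $c_1,\dots,c_t$, where $t$ is the rank of $C$ and $0\le t\le m$.)

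First, $C$ is finitely generated. Every subgroup of $\mathbb Z^m$ is free of rank at most $m$; this can be shown by induction on $m$, using the projection to the last coordinate, whose image is $d\mathbb Z$ and whose kernel is $C\cap\mathbb Z^{m-1}$. Alternatively, one can use that $\mathbb Z$ is Noetherian, so submodules of $\mathbb Z^m$ are finitely generated. In either case we may fix generators $a_1,\dots,a_s\in C$ and form the integer matrix $A\in\mathbb Z^{m\times s}$ whose columns are the $a_j$. Then $C=A\mathbb Z^s$.

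Next, apply the Smith normal form: there are $U\in GL(m,\mathbb Z)$ and $V\in GL(s,\mathbb Z)$ such that $UAV=D$. Here $D$ is diagonal in the sense that $D_{ii}=d_i$ for $i\le t$ and all other entries vanish, with $d_i\in\mathbb Z_+$ and $d_1\mid d_2\mid\cdots\mid d_t$. This is the main step. If we do not simply cite it, we prove it by the standard algorithm. Choose a nonzero entry of minimal absolute value and move it to position $(1,1)$ by row and column swaps. Use the division algorithm with elementary integer row and column operations, each of which is a unimodular matrix. If a nonzero remainder appears in the first row or column, it is smaller than the pivot, so we move it to the pivot position and repeat. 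Since $|$pivot$|$ strictly decreases, this terminates with the first row and column cleared. Then recurse on the remaining submatrix. The divisibility chain is not needed for this proposition.

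Finally, set $B:=U^{-1}\in GL(m,\mathbb Z)$, and let $b_1,\dots,b_m$ be its columns. These form a basis of $\mathbb Z^m$, by the remark preceding the proposition. Since $V$ is unimodular, $V\mathbb Z^s=\mathbb Z^s$, so
\begin{equation*}
C=A\mathbb Z^s=AV\mathbb Z^s=BD\mathbb Z^s=\Big\{\sum_{i=1}^t x_i d_i b_i : x_i\in\mathbb Z\Big\}.
\end{equation*}
Hence $\{d_1b_1,\dots,d_tb_t\}$ generates $C$. These elements are linearly independent over $\mathbb Z$, because the $b_i$ are linearly independent and $d_i\neq 0$. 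So the coefficients in Definition \ref{def1.1.1} are unique, and this set is a basis of $C$; we take $c_i:=d_i$. By Lemma \ref{lem1.1.1}, $t$ equals the rank of $C$. The main obstacle is the termination argument of the reduction algorithm; everything else is bookkeeping.
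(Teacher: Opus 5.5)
Your proof is correct and takes the Smith-normal-form route that the paper itself names as the constructive proof just before the proposition. The paper's formal proof simply cites Hungerford, Theorem 1.6, whose argument is a basis-free form of the same minimal-pivot Euclidean reduction. Two small points: the remark before the proposition gives the direction ``every basis of $\mathbb Z^m$ comes from a matrix in $GL(m,\mathbb Z)$'', whereas you need the converse, which is immediate because $B^{-1}$ has integer entries; and multiplying a row by $-1$ is what makes $d_i\in\mathbb Z_+$.
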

\begin{proof}
Theorem 1.6 in \cite{hungerford}.
\end{proof}
\begin{defi}\label{def1.1.2}
For every subspace $V$ of $\mathbb R^m,$
$r(V) := rank (V \cap \mathbb Z^m).$
\end{defi}
\begin{defi}\label{def1.1.3}
Subgroup $C, D$ are complements if $C+D = \mathbb Z^m, \,  C \cap D = \{0\}.$
\end{defi}
\begin{cor}\label{cor1.1.1}
A subgroup $C \subset \mathbb Z^m$ has a complement 
$\iff$ $\mathbb R(C) \cap \mathbb Z^m = C$
\end{cor}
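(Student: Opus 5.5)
We prove the two implications separately, using Proposition \ref{prop1.1.1} (the Smith normal form) to choose a basis adapted to $C$.

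\emph{($\Rightarrow$)} Suppose $D$ is a complement of $C$. Take $x \in \mathbb R(C) \cap \mathbb Z^m$ and write $x = c + d$ with $c \in C$, $d \in D$. Then $d = x - c \in \mathbb R(C)$. Since $\mathbb R(C)$ is spanned by finitely many integer vectors of $C$, and $\mathbb R(C)\cap\mathbb Z^m$ is a subgroup of rank $\dim\mathbb R(C) = \operatorname{rank} C$, the index $N := [\mathbb R(C)\cap \mathbb Z^m : C]$ is finite. Hence $Nd \in C$. Because $D$ is a subgroup, also $Nd \in D$, so $Nd \in C \cap D = \{0\}$. Therefore $d = 0$ and $x = c \in C$. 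The reverse inclusion $C \subset \mathbb R(C)\cap\mathbb Z^m$ is trivial.

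The finiteness of the index follows directly from Proposition \ref{prop1.1.1}, applied to the subgroup $C' := \mathbb R(C)\cap\mathbb Z^m$ and its subgroup $C$ (both free, by Dedekind), with ranks equal to $\dim \mathbb R(C)$. A simpler route avoids the index: $C$ contains a basis of $\mathbb R(C)$, so every integer vector $d \in \mathbb R(C)$ is a rational combination of that basis, and some positive integer multiple $Nd$ therefore lies in $C$.

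\emph{($\Leftarrow$)} Suppose $\mathbb R(C) \cap \mathbb Z^m = C$. Apply Proposition \ref{prop1.1.1} to obtain a basis $b_1,\dots,b_m$ of $\mathbb Z^m$ and positive integers $c_1,\dots,c_t$ such that $c_1b_1,\dots,c_tb_t$ is a basis of $C$. Each $b_i$ with $i \le t$ satisfies $b_i = c_i^{-1}(c_ib_i) \in \mathbb R(C)\cap\mathbb Z^m = C$. Since $b_i$ is an integer vector lying in $C$, it is an integer combination of the basis $\{c_jb_j\}$. By uniqueness of coordinates in the basis $\{b_j\}$ of $\mathbb Z^m$, this forces $c_i$ to divide $1$, so $c_i = 1$. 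Thus $C = \mathbb Z(b_1,\dots,b_t)$. Set $D := \mathbb Z(b_{t+1},\dots,b_m)$. Because $\{b_j\}$ is a basis of $\mathbb Z^m$, we get $C + D = \mathbb Z^m$ and $C\cap D = \{0\}$.

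The only step needing care is the deduction $c_i = 1$ from the saturation hypothesis; everything else follows directly from Proposition \ref{prop1.1.1}.
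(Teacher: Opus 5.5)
Your proof is correct. The $(\Leftarrow)$ direction is the paper's argument: take the adapted basis from Proposition~\ref{prop1.1.1}, use saturation to get $b_i\in C$ and hence $c_i=1$, and let $D$ be spanned by the remaining $b_j$. You also supply the coordinate comparison $1=a_ic_i$ that the paper leaves implicit in ``therefore $c_j=1$''.

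The $(\Rightarrow)$ direction is where you differ. The paper sets $U=\mathbb R(C)$, $V=\mathbb R(D)$ and counts dimensions: $U+V=\mathbb R^m$ and $\dim U+\dim V\le \mathrm{rank}\,C+\mathrm{rank}\,D=m$, so $U\cap V=\{0\}$, and therefore $d=x-c\in U\cap V$ vanishes. You instead use torsion. The vector $d\in\mathbb R(C)\cap\mathbb Z^m$ has a positive multiple $Nd\in C$, and $Nd\in C\cap D=\{0\}$ forces $d=0$ because $\mathbb Z^m$ is torsion-free.

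The paper's route gives the stronger geometric fact $\mathbb R^m=\mathbb R(C)\oplus\mathbb R(D)$. It relies on rank additivity for $\mathbb Z^m=C\oplus D$, and hence on Lemma~\ref{lem1.1.1}. Your route needs no rank bookkeeping and shows the real mechanism: $(\mathbb R(C)\cap\mathbb Z^m)/C$ is torsion, while $\mathbb Z^m/C\cong D$ is torsion-free. The price is the standard fact, via Cramer's rule, that an integer vector in the span of linearly independent integer vectors has rational coordinates; you assert this but do not prove it.

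Of your two justifications for $Nd\in C$, the rational-coordinates one is cleaner. The finite-index route also needs $\mathrm{rank}(\mathbb R(C)\cap\mathbb Z^m)=\dim\mathbb R(C)=\mathrm{rank}\,C$, which is true but is an additional unproved claim.
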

\begin{proof}
$\implies$ Let $U := \mathbb R(C).$ If $C$ has complement $D,$
let $V := \mathbb R(D)$ and 
$\ell \in U \cap \mathbb Z^m.$ It suffices to prove that $\ell \in C.$
Since $C+D = \mathbb Z^m,$
$\ell = c + d$ where $c \in C$ and $d \in D$
so $\ell - c = d$ belongs to $U \cap V.$
hence it suffices to prove that
$U \cap V  = \{0\}.$
Clearly
$U+V = \mathbb R^m,$
$dim \, U \leq rank\, C,$
$dim \, V \leq rank\, D.$
Since
$rank \, C + rank \, D = m,$
$dim \, U + \dim \, V \leq m,$
hence
$U \cap V  = \{0\}.$
\newline
$\impliedby$
Let $n := rank \, C$ and
$\mathcal B := \{b_1,...,b_m\},$ resp. 
$\mathcal C := \{c_1b_1,...,c_nb_t\}$
be the basis for $\mathbb Z^m,$ resp. $C$ given by 
Proposition \ref{prop1.1.1}. 
Let $D$ be the subgroup of $\mathbb Z^m$ that has
the basis $\{b_{n+1},...,b_m\}.$
Assume that $U$ is a subspace of $\mathbb R^m$
such that $U \cap \mathbb Z^m = C.$ 
Then for all $1 \leq j \leq n,$ 
$$b_j = c_j^{-1}(c_jb_j) \in U \cap \mathbb Z^m = C.$$
Therefore $c_j = 1,$ $\{b_1,...,b_n\}$ is a basis for $C$
and $D$ is a complement of $C.$
\end{proof}

\begin{cor}\label{cor1.1.2}
Let $\mathcal B,$ $\mathcal C,$
and $D$ be as in the $\impliedby$ part of the above proof 
and let $C$ be the subgroup with basis
$\mathcal C.$ If $C$ had a complement $E$ the $D$ is also n
$C$ has a complement $D$
\end{cor}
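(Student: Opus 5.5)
The statement as printed is garbled. I read it as follows: with $\mathcal B=\{b_1,\dots,b_m\}$, $\mathcal C=\{c_1b_1,\dots,c_nb_n\}$ and $D=\mathbb Z(\{b_{n+1},\dots,b_m\})$ as in the $\impliedby$ part of the proof of Corollary \ref{cor1.1.1}, and $C=\mathbb Z(\mathcal C)$, if $C$ has some complement $E$, then $D$ is also a complement of $C$. The plan is to reduce to the $\impliedby$ argument already given. The only difference is that the hypothesis is now the existence of a complement rather than the equality $\mathbb R(C)\cap\mathbb Z^m=C$.

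First, apply the $\implies$ direction of Corollary \ref{cor1.1.1} to $C$ and its complement $E$. This gives $\mathbb R(C)\cap\mathbb Z^m=C$.

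Next, for each $1\le j\le n$ we have $b_j=c_j^{-1}(c_jb_j)\in\mathbb R(C)\cap\mathbb Z^m=C$. So $b_j=\sum_{i=1}^n a_i c_i b_i$ for some integers $a_i$. Uniqueness of coefficients with respect to the basis $\mathcal B$ (Definition \ref{def1.1.1}) forces $a_jc_j=1$. Since $c_j\in\mathbb Z_+$, this gives $c_j=1$, and hence $\{b_1,\dots,b_n\}$ is a basis of $C$.

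Finally, check the two conditions of Definition \ref{def1.1.3} for $D$. Every $\ell\in\mathbb Z^m$ has an expansion $\ell=\sum_{i=1}^m c_\ell(b_i)\,b_i$. Splitting it at index $n$ writes $\ell$ as an element of $C$ plus an element of $D$, so $C+D=\mathbb Z^m$. If $x\in C\cap D$, then $x$ has one expansion supported on $\{b_1,\dots,b_n\}$ and another supported on $\{b_{n+1},\dots,b_m\}$. Uniqueness of the expansion in $\mathcal B$ makes all coefficients zero, so $x=0$.

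The main obstacle is not mathematical but interpretive: the statement must first be read correctly. Note also that the index $t$ in Proposition \ref{prop1.1.1} must be taken equal to $n=\operatorname{rank} C$. The step needing care is the deduction $c_j=1$. It must use uniqueness of the expansion in $\mathcal B$, not merely membership of $b_j$ in $C$.
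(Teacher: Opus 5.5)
Your reading of the garbled statement is the natural one, and your argument is correct. It is not, however, the argument the paper writes out.

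\textbf{The paper's route.} It takes a basis $\mathcal E=\{e_1,\dots,e_{m-n}\}$ of $E$ and writes the change of basis from $\mathcal B$ to $\mathcal C\cup\mathcal E$ as a matrix $M\in GL(m,\mathbb Z)$. The $\mathcal B$-coordinates of $c_jb_j$ are $c_j$ times the $j$-th unit vector, so $M$ is block upper triangular with $\mathrm{diag}(c_1,\dots,c_n)$ in the upper left corner. Then $\pm1=\det M=c_1\cdots c_n\det Q$ forces every $c_j=1$.

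\textbf{Your route.} You go through Corollary \ref{cor1.1.1} in both directions. The existence of $E$ gives $\mathbb R(C)\cap\mathbb Z^m=C$, and the $\impliedby$ argument then forces $c_j=1$. This is exactly the alternative the paper flags in its one-line remark after the proof.

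\textbf{Comparison.} The determinant argument stays entirely within $\mathbb Z$-linear algebra. It needs only the unimodularity of integer change-of-basis matrices, together with $|\mathcal E|=m-n$ from Lemma \ref{lem1.1.1}. Your route passes through real spans and the dimension count in the $\implies$ direction, but given Corollary \ref{cor1.1.1} it is shorter and avoids all matrix bookkeeping.

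\textbf{Details you supply that the paper leaves implicit.} You justify $c_j=1$ by uniqueness of coordinates in $\mathcal B$, whereas the proof of Corollary \ref{cor1.1.1} only asserts ``Therefore $c_j=1$.'' You also verify that $D$ satisfies Definition \ref{def1.1.3}, while the paper's proof stops at $c_1=\cdots=c_n=1$.
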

\begin{proof}
Let $\mathcal E = \{e_1,...,e_{m-n}\}$ be a basis for a complement $E$ of $C.$
Then both $\mathcal C \cup \mathcal E$ and
$\{\mathcal B$ are bases for $\mathbb Z^m$ hence 
there exists $M \in GL(m,\mathbb Z$ with
$$
[c_1b_1 \, c_2b_2 \, \cdots c_nb_n \, e_1 \, \cdots e_{m-n}] = 
[b_1 \, b_2 \, \cdots b_n \, b_{n+1} \cdots b_m]\, M.
$$
Since each $c_jb_j$ is a unique linear combination of elements
in $\mathcal B$ with integer coefficeints, there exist
$O \in \mathbb Z^{m-n,n}$ with all entries $- 0,$
$P \in \mathbb Z^{n,m-n}$ and $Q \in \mathbb Z^{n,m-n}$
such that
$$
M = \left[
\begin{array}{cc}
diag(c_1,...,c_n) & P \\
O & Q
\end{array}
\right]
$$
Ther $\pm 1 = det \, M = c_1 = \cdots c_n \, det(Q)$
so $c_1 = \cdots c_n = 1.$ 
\end{proof}
Corollary \ref{cor1.1.2} also follows from the proof
of Corollary \ref{cor1.1.1}.
}
\subsection{Projections of $\mathbb Z^m$}\label{sub1.2}
\begin{lem}\label{lem1.2.1}
For $W \in Gr(n,m),$ $r(W) = n$ $\iff$ $P_W \in \mathbb Q^{m \times m}.$
\end{lem}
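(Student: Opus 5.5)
We prove the two implications separately. In both directions the tool is the identity $P_W = B(B^TB)^{-1}B^T$, valid for any $m \times n$ matrix $B$ whose columns form a basis of $W$.

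For $\implies$, assume $r(W) = n$. Then $W \cap \mathbb Z^m$ is a free abelian group of rank $n$ (Lemma \ref{lem1.1.1}), so we may pick $n$ integer vectors $b_1,\dots,b_n \in W \cap \mathbb Z^m$ that are linearly independent over $\mathbb Z$. We first check that they are also linearly independent over $\mathbb R$. A real dependence among integer vectors gives a nonzero real solution of a homogeneous linear system with rational coefficients. Such a system then also has a nonzero rational solution, because Gaussian elimination over $\mathbb Q$ computes the kernel. Clearing denominators turns this into a nonzero integer dependence, which is a contradiction. Since $\dim W = n$, the vectors $b_1,\dots,b_n$ therefore span $W$. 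Let $B := [b_1 \cdots b_n] \in \mathbb Z^{m\times n}$. Then $B^TB$ is an invertible integer matrix, and Cramer's rule puts $(B^TB)^{-1}$ in $\mathbb Q^{n\times n}$. Hence $P_W = B(B^TB)^{-1}B^T \in \mathbb Q^{m\times m}$.

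For $\impliedby$, assume $P_W \in \mathbb Q^{m\times m}$. The range of $P_W$ is $W$, so $W$ is spanned by the columns of $P_W$, and these columns are rational vectors. Choose $n$ linearly independent columns and multiply each by a common denominator. This gives $n$ integer vectors in $W \cap \mathbb Z^m$ that are linearly independent over $\mathbb R$, hence over $\mathbb Z$, so $r(W) \ge n$. In the other direction, any $\mathbb Z$-independent family in $W \cap \mathbb Z^m$ is $\mathbb R$-independent by the rational-solution argument above, and so has at most $\dim W = n$ elements. Therefore $r(W) \le n$, and $r(W) = n$.

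The only step that is not a direct computation is the equivalence of linear independence over $\mathbb Z$ and over $\mathbb R$ for integer vectors. It follows from the rationality of row reduction, as explained above. An alternative route is to invoke Proposition \ref{prop1.1.1} together with Corollary \ref{cor1.1.1}: the group $C := W \cap \mathbb Z^m$ satisfies $\mathbb R(C) \cap \mathbb Z^m = C$, so it has a complement, which supplies an integral basis directly. Everything else in the proof is routine.
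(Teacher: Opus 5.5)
Your proof is correct and follows the paper's route. For $\implies$ the paper takes an integral basis of $W$ and writes $P_W = MD^{-1}M^T$ using a rational Gram--Schmidt orthogonalization, which is the special case of your $B(B^TB)^{-1}B^T$ with $B^TB$ diagonal; for $\impliedby$ it likewise clears denominators in the columns of $P_W$. You are more careful than the paper in two places it takes for granted: you show that a $\mathbb Z$-basis of $W \cap \mathbb Z^m$ is $\mathbb R$-independent and hence spans $W$, and you check the upper bound $r(W) \le n$.
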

\begin{proof}
$\implies$ 
If $r(W) = n$ then there exists a basis 
$\{b_1,...,b_n\}  \subset \mathbb Z^m$ for $W.$ 
Construct a matrix
$M \in \mathbb Q^{m \times n}$ whose column vectors
$m_1,...,m_n$ are the orthogonal basis for $W$ computed by the 
Gram-Schmidt orthogonalization algorithm applied to 
the sequence  $b_1,...,b_n$ and define 
$D := \mbox{diag }(d_1,...,d_n)$ where $d_i := (m_i,m_i).$
Then
$P_{W} = MD^{-1}M^T \in \mathbb Q^{m \times m}.$ 
\newline
$\impliedby$ If $P_W \in \mathbb Q^{m \times m}$ and 
$d$ is the least common multiple of the denominators of 
the entries of $P_W,$ then $d P_W \in \mathbb Z^{m \times m}$ hence 
$d\, P_W(\mathbb Z^m) \subset \mathbb Z^m$ hence its colums are
a basisin $\mathbb Z^m$  for $W$ so $r(W) = n.$
\end{proof}
\begin{lem}\label{lem1.2.2}
$\mathbb Z^m_{n,disc} = 
\{W \in Gr(n,m) : r(W) = n\}$
\end{lem}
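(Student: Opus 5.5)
We prove the two inclusions separately. By Lemma \ref{lem1.2.1} the condition $r(W)=n$ is equivalent to $P_W \in \mathbb Q^{m\times m}$. Discreteness of $P_W(\mathbb Z^m)$ is equivalent to $0$ not being a limit point, since $P_W(\mathbb Z^m)$ is a subgroup of $W$ (translate any accumulation point to $0$).

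\emph{($\supseteq$)} If $r(W)=n$, let $d$ be the least common multiple of the denominators of the entries of $P_W$. Then $d\,P_W(\mathbb Z^m)\subset \mathbb Z^m$, so $P_W(\mathbb Z^m)\subset d^{-1}\mathbb Z^m$. The latter is a discrete subset of $\mathbb R^m$, hence so is $P_W(\mathbb Z^m)$, and $W \in \mathbb Z^m_{n,disc}$. This direction is routine.

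\emph{($\subseteq$)} Suppose $\Gamma := P_W(\mathbb Z^m)$ is discrete. A discrete subgroup of $W\cong\mathbb R^n$ is a lattice in its real span. Since $\Gamma$ spans $W$ (because $\mathbb Z^m$ spans $\mathbb R^m$), it is a full-rank lattice of rank $n$ in $W$. Hence $K := \ker(P_W|_{\mathbb Z^m}) = W^\perp\cap\mathbb Z^m$ has rank $m-n=k$, by Lemma \ref{lem1.1.1} applied to $\mathbb Z^m/K\cong\Gamma$, which is free of rank $n$. So $r(W^\perp)=k$, i.e.\ $W^\perp$ is spanned by integer vectors.

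It remains to transfer rationality from $W^\perp$ to $W$. By Lemma \ref{lem1.2.1} applied in $Gr(k,m)$, $P_{W^\perp}\in\mathbb Q^{m\times m}$, hence $P_W = I - P_{W^\perp}\in \mathbb Q^{m\times m}$, and Lemma \ref{lem1.2.1} again gives $r(W)=n$.

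The main obstacle is the step deducing $\operatorname{rank} K = k$ from discreteness of $\Gamma$. It needs two facts:
\begin{itemize}
\item a discrete subgroup of $\mathbb R^n$ is free of rank at most $n$, with rank equal to the dimension of its span;
\item ranks are additive in $0\to K\to\mathbb Z^m\to\Gamma\to 0$.
\end{itemize}
The second holds because $\Gamma$ is free, so the sequence splits. For the first I would use the standard argument: choose a maximal linearly independent subset of $\Gamma$, and show by a pigeonhole or compactness argument on the fundamental parallelepiped that $\Gamma$ contains this sublattice with finite index. The finite index uses discreteness, since only finitely many lattice points lie in a bounded set.
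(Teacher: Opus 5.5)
Your proof is correct and follows essentially the same route as the paper. The $\supseteq$ direction uses $P_W(\mathbb Z^m)\subset d^{-1}\mathbb Z^m$; the $\subseteq$ direction shows $\Gamma$ has rank $n$, deduces $\operatorname{rank}(W^\perp\cap\mathbb Z^m)=k$, and transfers rationality from $P_{W^\perp}$ to $P_W=I_m-P_{W^\perp}$ via Lemma \ref{lem1.2.1}. You also justify two facts the paper uses without comment: discrete subgroups of $\mathbb R^n$ are free of rank equal to the dimension of their span, and ranks add in $0\to K\to\mathbb Z^m\to\Gamma\to 0$ because the sequence splits.
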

\begin{proof}
Lemma \ref{lem1.2.1} impies it suffices to show that $G := P_W(L)$ is discrete $\iff$
$P_W \in \mathbb Q^{m \times m}.$
If $P_W \in \mathbb Q^{m \times m}$ and $d$ is the least common multiple of the denominators of entries of $P_W$ then
$dP_W \in \mathbb Z^{m \times m}$ so
$P_W(\mathbb Z^m) \subset d^{-1}\mathbb Z^m$ is discrete.
If $G$ is discrete then 
$G \cong \mathbb Z^p$ where 
$p = rank \,  G \leq n$ 
Since $\mathbb R(G) = W,$ $p = n.$
Since $P_W : \mathbb Z^m \mapsto G$ is an epimorphism,
$G \cong \mathbb Z^m / C$ where 
$C := W^\perp \cap \mathbb Z^m.$
Therefore $rank \, C = k$
hence $r(W^\perp) = k$ so
Lemma \ref{lem2.1.1} implies 
$P_{W^\perp} \in \mathbb Q^{m \times m}.$
Then $P_W = I_m - P_{W^\perp} \in \mathbb Q^{m \times m},$
so Lemma \ref{lem2.1.1} implies $r(W) = n.$
\end{proof}
\begin{lem}\label{lem1.2.3}
$\mathbb Z^m_{n,lim} = \{W \in Gr(n,m) : r(W) < n \}$
\end{lem}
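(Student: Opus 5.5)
Since $G := P_W(\mathbb Z^m)$ is a subgroup of $W$, it is either discrete or has $0$ as a limit point. If $G$ is not discrete, there are distinct $g_j \to g$ in $G$, and then $g_j - g \in G\backslash\{0\}$ tends to $0$. Conversely, a discrete subgroup has no limit points at all. Hence $\mathbb Z^m_{n,lim} = Gr(n,m) \backslash \mathbb Z^m_{n,disc}$, and Lemma \ref{lem1.2.2} gives $\mathbb Z^m_{n,lim} = \{W : r(W) \neq n\}$. Since $W \cap \mathbb Z^m$ is a discrete subgroup of the $n$-dimensional space $W$, its rank is at most $n$, so $r(W) \neq n$ is equivalent to $r(W) < n$. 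This proves the statement.

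The plan therefore has three steps. First, record the dichotomy discrete versus accumulating at $0$ for subgroups of $\mathbb R^n$, using the translation argument above. Second, invoke Lemma \ref{lem1.2.2}. Third, justify the bound $r(W) \leq n$. For this, take a basis of $W\cap\mathbb Z^m$ (free by Proposition \ref{prop1.1.1}). A discrete subgroup of $\mathbb R^n$ has a basis that is $\mathbb R$-linearly independent, so the basis has at most $n$ elements. Alternatively, any $n+1$ integer vectors in $W$ are $\mathbb R$-dependent, hence $\mathbb Q$-dependent because they have rational entries, hence $\mathbb Z$-dependent after clearing denominators, so the rank is at most $n$. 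I will use this second argument since it needs nothing beyond rationality.

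The main obstacle is that Lemma \ref{lem1.2.2}, as written, cites Lemma \ref{lem2.1.1} where Lemma \ref{lem1.2.1} is meant. Its step "$G$ discrete implies $G \cong \mathbb Z^n$ and $\operatorname{rank} C = k$" also deserves care: $\mathbb Z^m/C \cong G$ being free of rank $n$ forces $\operatorname{rank} C = m-n = k$. If I wanted to avoid relying on that lemma, I would give a direct argument for the inclusion $\{r(W)<n\} \subset \mathbb Z^m_{n,lim}$. If $r(W)<n$, then by Lemma \ref{lem1.2.1} and $P_W = I - P_{W^\perp}$ we get $r(W^\perp)<k$. Then $C = W^\perp\cap\mathbb Z^m$ has rank less than $k$, so $G \cong \mathbb Z^m/C$ has rank greater than $n$. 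A subgroup of $\mathbb R^n$ of rank greater than $n$ cannot be discrete, since discrete subgroups have rank at most $n$ as in the third step. So $0$ is a limit point of $G$. For the reverse inclusion, if $r(W)=n$ then Lemma \ref{lem1.2.1} gives $dP_W$ integral for some $d \in \mathbb Z_+$. Hence $G\subset d^{-1}\mathbb Z^m$ is discrete and $0$ is not a limit point.
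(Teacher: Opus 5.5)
Your proof is correct and follows the paper's route. Both arguments identify $\mathbb Z^m_{n,lim}$ with $Gr(n,m)\backslash \mathbb Z^m_{n,disc}$ and apply Lemma \ref{lem1.2.2}; your translation argument ($g_j - g \to 0$) is a more elementary substitute for the paper's remark that $\overline{P_W(\mathbb Z^m)}$ contains a copy of $\mathbb R$, and you make explicit the bound $r(W) \leq n$ that the paper leaves tacit. One small caution about your optional direct route: the rank bound for a discrete $G = P_W(\mathbb Z^m)$ must come from the $\mathbb R$-independent-basis argument of your third step, not the rationality argument, since $G$ need not consist of rational vectors when $r(W) < n$.
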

\begin{proof}
If $W \in \mathbb Z^m_{n,lim}$ then $P_W(\mathbb Z^m)$ is not discrete and Lemma \ref{lem1.2.2} implies $r(W) < n.$
If $r(W) < n$ Lemma \ref{lem1.2.2} implies that the subgroup
$P_W(\mathbb Z^m)$ is not discrete hence its closure $\overline{P_W(\mathbb Z^m)}$
contains a closed subgroup isomorphic to $\mathbb R$ so $0$ is a limit point
of $P_W(\mathbb Z^m).$ 
\end{proof}
\section{Topological Properties of Lattice Subset Projections}\label{sec2}
We extend 
Lemmas \ref{lem1.2.2}
and \ref{lem1.2.3} by proving that
if $L$ is large, then $L_{n,disc}$ is small and
$L_{n,lim}$ is large with size defined
topologically, We also prove that if
$L$ is small, then $L_{n,disc}$ is large and
$L_{n,lim}$ is small. 
Clearly $L_{n,disc} \cap L_{n,lim} = 0.$
\subsection{Grassmannian Geometry}\label{sub2.1}
$I_m \in \mathbb Z^{m \times m}$ is the identity matrix.
The general linear group
$GL(m,\mathbb R) := 
\{M \in \mathbb R^{m \times m} : \det M \neq 0 \}.$ 
$SO(m,\mathbb R) := \{M \in GL(m,\mathbb R) : 
M^TM = I_m, \, \det M = 1\}$  
is its rotation subhroup.
Let $W \in Gr(n,m).$ Then the stabilizer
group  
$\{g \in SO(m,\mathbb R) : gW = W\} \cong
SO(n,\mathbb R) \times SO(k,\mathbb R).$ Since
$SO(m,\mathbb R)$ acts transitively on $Gr(n,m),$ 
$Gr(n,m)$ equals the compact homogeneous space
$SO(m,\mathbb R) / 
[SO(n,\mathbb R)  \times SO(k,\mathbb R)]$ which 
is a compact $kn$-dimensional homogeneour space.
\begin{lem}\label{lem2.1.1}
There exists a unique rotation invariant measure $\sigma_n$
on $Gr(n,n).$ The map $W \to W^\perp$ is a measure 
preserving homeomorphism of $Gr(n,m)$ onto $Gr(k,m).$
\end{lem}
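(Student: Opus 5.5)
We read the statement as asserting two things: the Grassmannian $Gr(n,m)$ carries a unique $SO(m,\mathbb R)$-invariant Borel probability measure $\sigma_n$ (the "$Gr(n,n)$" in the statement is evidently a typo for $Gr(n,m)$), and $\perp : W \mapsto W^\perp$ is a homeomorphism $Gr(n,m) \to Gr(k,m)$ carrying $\sigma_n$ to $\sigma_k$. We prove existence and uniqueness by the standard homogeneous-space argument, and then obtain the second claim from equivariance of $\perp$ together with uniqueness.

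\emph{Existence.} Let $\mu$ be the normalized Haar measure on the compact group $G := SO(m,\mathbb R)$. It is bi-invariant because $G$ is compact. Fix $W_0 \in Gr(n,m)$ and let $\pi : G \to Gr(n,m)$, $\pi(g) := gW_0$. This map is continuous and surjective, since the action is transitive as noted above. Define $\sigma_n := \pi_*\mu$. For $h \in G$ and a Borel set $E$,
$$\sigma_n(hE) = \mu(\{g : gW_0 \in hE\}) = \mu(h\,\pi^{-1}(E)) = \mu(\pi^{-1}(E)) = \sigma_n(E),$$
so $\sigma_n$ is rotation invariant.

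\emph{Uniqueness.} This is the only step needing a real idea. Let $\nu$ be any invariant Borel probability measure on $Gr(n,m)$, and let $f \in C(Gr(n,m))$. For each fixed $W = g_WW_0$, the function $g \mapsto f(gW) = f(gg_WW_0)$ has Haar integral $\int_G f(gW_0)\,d\mu(g)$, by right invariance of $\mu$. Integrate this identity in $W$ against $\nu$, and apply Fubini (everything is continuous on compact spaces, so bounded). Then use invariance of $\nu$, which says $\int f(gW)\,d\nu(W) = \int f\,d\nu$ for every $g$. This gives
$$\int f\,d\nu = \int_G\int f(gW)\,d\nu(W)\,d\mu(g) = \int_{Gr}\int_G f(gW)\,d\mu(g)\,d\nu(W) = \int f\,d\sigma_n.$$
By the Riesz representation theorem, $\nu = \sigma_n$.

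\emph{The map $\perp$.} Identify $W$ with its projection matrix $P_W$; this realizes $Gr(n,m)$ as a compact subset of $\mathbb R^{m\times m}$ with the standard topology. Since $P_{W^\perp} = I_m - P_W$, the map $\perp$ is the restriction of an affine isometry, hence continuous. It is its own inverse (as a map $Gr(k,m) \to Gr(n,m)$), so it is a homeomorphism. For $g \in SO(m,\mathbb R)$ we have $(gW)^\perp = gW^\perp$, because $g$ is orthogonal. Hence $\perp_*\sigma_n$ is a rotation-invariant probability measure on $Gr(k,m)$, and by the uniqueness just proved (applied with $k$ in place of $n$) it equals $\sigma_k$. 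That is, $\perp$ is measure preserving.
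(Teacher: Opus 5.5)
Your proof is correct, and its existence step is exactly the paper's: $\sigma_n$ is the push-forward of Haar measure $\nu$ under the orbit map $g \mapsto gW$. After that the two routes differ.

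The paper never proves uniqueness. It only asserts that $\sigma_n$ is ``well defined'', i.e.\ independent of the base point, which follows from right invariance of $\nu$. It also does not address continuity of $\perp$ in this proof; that appears only later, as the isometry statement of Lemma~\ref{lem2.1.4}.

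For measure preservation the paper computes directly, taking $W^\perp$ as the base point for $\sigma_k$: $\sigma_k(O^\perp) = \nu(\{g : gW^\perp \in O^\perp\}) = \nu(\{g : gW \in O\}) = \sigma_n(O)$, since $(gW^\perp)^\perp = gW$. You instead combine the equivariance $(gW)^\perp = gW^\perp$ with your Fubini averaging argument for uniqueness on $Gr(k,m)$.

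Both arguments rest on the same identity. The paper's computation is more economical, since it needs only base-point independence rather than uniqueness. Yours, however, establishes every assertion of the lemma, including the uniqueness and homeomorphism claims the paper leaves unproved, and it gives base-point independence for free.

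Your $P_{W^\perp} = I_m - P_W$ argument for the homeomorphism is also cleaner than working with principal angles. To connect it with the paper's topology, note that for $V, W \in Gr(n,m)$ the operator norm of $P_V - P_W$ equals $\sin\theta(V,W)$, so the two topologies coincide.
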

\begin{proof}
Let $\nu$ denote Haar measure on $SO(m,\mathbb R).$
Let $W \in Gr(n,m)$ and for an open $O \subset Gr(n,m)$ 
define
$\sigma_n(O) = \nu(\{g \in SO(m,\mathbb R) : gW \in O\}).$
$\sigma_n$ is well definied and rotationally invariant since
$\nu$ is translation invariant. Morepver,
$\sigma_k(O^\perp) = 
\nu(\{g \in SO(m,\mathbb R) : gW^\perp \in O^\perp\})$
\newline
$= \nu(\{g \in SO(m,\mathbb R) : (gW^\perp)^\perp \in O\})
= \sigma_n(O)$ 
since $(gW^\perp)^\perp = gW.$
\end{proof}
For $i, j \in \mathbb Z,$ $\delta_{i,j} := 1$ if $i = j$ and
$\delta_{i,j} := 0$ if $i \neq j.$
\begin{lem}\label{lem2.1.2}
If $V, W \in Gr(n,m),$ then there exist 
orthonormal bases 
$\{v_1,...,v_n\}$ of $V,$  
$\{w_1,...,w_n\}$ of $W,$ 
and angles
$
0 \leq \theta_1 \leq \cdots \leq \theta_n  \leq \pi/2,
$
satisfying
\begin{equation}\label{eq2.1.1}
(v_i,w_j) = \delta_{i,j}\, \cos \theta_i
\end{equation} 
\end{lem}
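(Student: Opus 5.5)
The plan is to obtain the principal angles and bases from the singular value decomposition of the cross-Gram matrix of two orthonormal frames. Choose arbitrary orthonormal bases $\{a_1,\dots,a_n\}$ of $V$ and $\{b_1,\dots,b_n\}$ of $W$, and form the matrices $A, B \in \mathbb R^{m \times n}$ whose columns are these vectors. Then $A^TA = B^TB = I_n$. Consider the $n \times n$ real matrix $G := A^TB$, whose $(i,j)$ entry is $(a_i,b_j)$.

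Apply the singular value decomposition to $G$. There exist orthogonal matrices $U, Q \in \mathbb R^{n\times n}$ and a diagonal matrix $\Sigma = \mbox{diag}(s_1,\dots,s_n)$ with $s_1 \geq \cdots \geq s_n \geq 0$ such that $G = U\Sigma Q^T$. The SVD can be derived from the spectral theorem applied to the symmetric positive semidefinite matrix $G^TG$. Define new frames $V' := AU$ and $W' := BQ$, and let $v_i, w_j$ be their columns. Since $U, Q$ are orthogonal, $(AU)^T(AU) = U^TU = I_n$, so $\{v_i\}$ is an orthonormal basis of $V$; likewise $\{w_j\}$ is an orthonormal basis of $W$. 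Moreover
$$(V')^TW' = U^TA^TBQ = U^TGQ = \Sigma,$$
so $(v_i,w_j) = \delta_{i,j}\, s_i$.

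It remains to check that the singular values are legitimate cosines. Since $|v_i| = |w_i| = 1$, the Cauchy–Schwarz inequality gives $0 \leq s_i = (v_i,w_i) \leq 1$. Equivalently, $\|G\| \leq \|A^T\|\,\|B\| \leq 1$. Therefore we may set $\theta_i := \arccos s_i \in [0,\pi/2]$. Because $\arccos$ is decreasing on $[0,1]$ and $s_1 \geq \cdots \geq s_n$, we get $\theta_1 \leq \cdots \leq \theta_n$, which is (\ref{eq2.1.1}) with the required ordering.

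No step is a genuine obstacle once the SVD is available. The only points needing care are making sure that the transformed frames remain orthonormal bases of the same subspaces, and that the ordering of the singular values produces the stated monotone ordering of the angles. If one prefers not to quote the SVD, the argument can be done inductively instead. First choose unit vectors $v_1 \in V$ and $w_1 \in W$ maximizing $(v,w)$, which exist by compactness of the unit spheres. Use a first-variation argument to show that $v_1^\perp \cap V$ and $w_1^\perp \cap W$ are mutually orthogonal to $w_1$ and $v_1$ respectively. Then recurse on these $(n-1)$-dimensional subspaces. Verifying this orthogonality, that $(v,w_1) = 0$ for $v \in V$ with $v \perp v_1$, is the one nontrivial step in that route.
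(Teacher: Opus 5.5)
Your proof is correct and complete, and it is essentially the argument behind the paper's proof. The paper's proof consists only of a citation to Jordan (1875) and to Bj\"orck. Your SVD construction is the standard Bj\"orck--Golub characterization of principal angles: the $\cos\theta_i$ are the singular values of $A^TB$ for any orthonormal frames $A$, $B$ of $V$, $W$. Your inductive fallback is Jordan's original variational definition.

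Your route also gives for free something the paper uses without comment in Lemma \ref{lem2.1.3}. Changing frames replaces $A^TB$ by $U_0^TA^TBQ_0$ with $U_0,Q_0$ orthogonal, so the ordered angles are uniquely determined by the pair $(V,W)$. This is what makes $\theta(V,W) := \theta_n$ well defined.
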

\begin{proof}
This was first proved in 1875 
by Camille Jordan \cite{jordan}, \cite{bjorck}. 
\end{proof}
The angles, called 
called canonical or principal angles, 
are the foundation of canonical correlation 
analysis in statistics
 (\cite{johnson}, Chapter 10).
\begin{lem}\label{lem2.1.3}
$\theta(V,W) := \theta_n$ is a metric on $Gr(n,m).$ 
\end{lem}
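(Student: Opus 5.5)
Our plan is to identify $\theta(V,W)$ with an operator norm, so that the metric axioms follow from those of a norm. Let $P_V, P_W$ be the orthogonal projections. We will show that
\[
\sin\theta(V,W) = \|P_V - P_W\| = \|P_{W^\perp}P_V\| = \sup_{v\in V,\,|v|=1} d(v,W),
\]
where $\|\cdot\|$ is the operator norm on $\mathbb R^{m\times m}$.

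We begin with the last equality. Take the bases $\{v_i\},\{w_i\}$ and angles from Lemma \ref{lem2.1.2}. Relation (\ref{eq2.1.1}) gives $P_W v_i = \cos\theta_i\, w_i$, so for a unit vector $v=\sum a_i v_i$,
\[
|P_W v|^2=\sum a_i^2\cos^2\theta_i, \qquad d(v,W)^2=\sum a_i^2\sin^2\theta_i .
\]
The supremum of $d(v,W)$ over unit $v \in V$ is therefore $\sin\theta_n$.

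For the equality with $\|P_V-P_W\|$ we work on $U:=V+W$. This space splits orthogonally into the planes $\mathrm{span}(v_i,w_i)$ with $\theta_i>0$ and the lines $\mathbb R v_i=\mathbb R w_i$ with $\theta_i=0$. To see that these pieces are mutually orthogonal, we check that $(v_i,w_j)=0$, $(v_i,v_j)=0$ and $(w_i,w_j)=0$ for $i\ne j$. On each plane, $P_V-P_W$ is the difference of two rank-one projections onto lines at angle $\theta_i$. This is a symmetric $2\times2$ matrix with trace $0$ and determinant $-\sin^2\theta_i$, so its eigenvalues are $\pm\sin\theta_i$. On $U^\perp$, and on the coincident lines, $P_V-P_W$ vanishes. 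Hence $\|P_V-P_W\|=\max_i\sin\theta_i=\sin\theta_n$. The orthogonality bookkeeping in this decomposition is the only delicate point. If it proves awkward, we can instead bound $\|(P_V-P_W)x\|$ directly using $P_V-P_W=P_VP_{W^\perp}-P_{V^\perp}P_W$.

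The metric axioms now follow. Since $\theta_n\in[0,\pi/2]$ and $\sin$ is increasing there, we may write $\theta(V,W)=\arcsin\|P_V-P_W\|$. Symmetry is clear. If $\theta(V,W)=0$, then $P_V=P_W$, so $V=W$.

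The main obstacle is the triangle inequality. The naive route gives only $\sin\theta(U,W)\le\sin\theta(U,V)+\sin\theta(V,W)$, and this does not imply the inequality for $\theta$ itself. We would instead argue with unit vectors. Fix a unit $u\in U$. Let $v$ be the normalized projection of $u$ onto $V$; by the first paragraph, $\angle(u,v)\le\theta(U,V)$. Apply the same step to $v$ and $W$ to get a unit $w \in W$ with $\angle(v,w)\le\theta(V,W)$. The angle between unit vectors is the geodesic metric on the sphere, so it satisfies the triangle inequality, and therefore
\[
\angle(u,W)\le\angle(u,w)\le\theta(U,V)+\theta(V,W).
\]
Here $\angle(u,W)=\arcsin d(u,W)$. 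The degenerate case where $P_V u=0$ occurs only when $\theta(U,V)=\pi/2$; in that case the sum on the right is already $\ge\pi/2$ and the bound is trivial. Taking the supremum over $u$ gives $\theta(U,W)\le\theta(U,V)+\theta(V,W)$, using the identity $\sin\theta(U,W)=\sup_{u}d(u,W)$ from the first paragraph.
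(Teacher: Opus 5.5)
Your proof is correct, but it is organized differently from the paper's, which is a single line: $\theta(V,W)$ equals the Hausdorff distance between $S_{m-1}\cap V$ and $S_{m-1}\cap W$, so the metric axioms are inherited from the Hausdorff metric on compact sets. You never name the Hausdorff distance, yet your two key steps are exactly what that line leaves implicit. First, identifying $\arcsin d(u,W)$ with the smallest angle from $u$ to a unit vector of $W$, together with $\sup_{u}d(u,W)=\sin\theta_n$, is the computation that the one-sided Hausdorff distance equals $\theta_n$. Second, your chaining $u\to v\to w$ through normalized projections is the standard proof of the Hausdorff triangle inequality, specialized to unit spheres. The paper's route buys brevity. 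Yours buys precision: it makes explicit that the distance on the sphere must be the angular (geodesic) one. With the chordal metric the Hausdorff distance is $2\sin(\theta_n/2)$, and because $x\mapsto 2\arcsin(x/2)$ is convex, the triangle inequality for $\theta_n$ would not follow from the one for $d_H$. Your sup formula also shows that $\theta_n$ does not depend on the bases chosen in Lemma \ref{lem2.1.2}. The operator-norm identity $\sin\theta(V,W)=\|P_V-P_W\|$ is correct but dispensable. Symmetry already follows from the symmetric form of (\ref{eq2.1.1}). Definiteness is also direct: $\theta_n=0$ forces all $\theta_i=0$, hence $v_i=w_i$ for every $i$ and $V=W$.
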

\begin{proof}
$\theta(V,W) = d_H(S_{m-1} \cap V, S_{m-1} \cap W),$
where $d_H =$ iHausdorff 
metric.
\end{proof}
The following sets 
$$
A(W,\alpha) := 
\{U \in Gr(k,m) : \theta(U,W) < \alpha \}, \ 
W \in Gr(n,m), \, \alpha \in (0,\pi/2).
$$
are a basis of open sets for the topology on 
$Gr(n,m).$ 
\begin{lem}\label{lem2.1.4}
$V \mapsto V^\perp$ is an isometry 
from $Gr(k,m)$ to $Gr(n,m).$
\end{lem}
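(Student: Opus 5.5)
The plan is to show that the principal angles of $(V,W)$ coincide with the principal angles of $(V^\perp,W^\perp)$, up to the zero angles that are added or dropped. Since the metric $\theta$ of Lemma \ref{lem2.1.3} is the largest principal angle, this gives $\theta(V^\perp,W^\perp) = \theta(V,W)$. The map $V \mapsto V^\perp$ is clearly a bijection $Gr(k,m) \to Gr(n,m)$, being its own inverse, so it is an isometry.

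For a cleaner argument I would use the description of $\theta$ through projections. Using the bases of Lemma \ref{lem2.1.2}, for $V,W$ of the same dimension one has
$$\sin\theta(V,W) = \|P_V - P_W\|$$
(operator norm). To see this, decompose $\mathbb R^m$ into the mutually orthogonal planes $\mathbb R(\{v_i,w_i\})$ and the common orthogonal complement. On each plane, $P_V - P_W$ is the difference of two rank-one projections onto lines at angle $\theta_i$, and its eigenvalues are $\pm\sin\theta_i$. On the complement, both projections vanish. Hence the norm is $\max_i \sin\theta_i = \sin\theta_n$. Because $\sin$ is strictly increasing on $[0,\pi/2]$, $\theta(V,W) = \arcsin\|P_V - P_W\|$.

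The key step is then one line. Since $P_{V^\perp} = I_m - P_V$,
$$P_{V^\perp} - P_{W^\perp} = -(P_V - P_W),$$
so $\|P_{V^\perp} - P_{W^\perp}\| = \|P_V - P_W\|$, and therefore $\theta(V^\perp,W^\perp) = \theta(V,W)$.

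The main obstacle is justifying $\sin\theta(V,W) = \|P_V - P_W\|$ from Lemma \ref{lem2.1.2}. The difficulty is the planes where $\theta_i = \pi/2$, and the geometry when $2n > m$, where the planes $\mathbb R(\{v_i,w_i\})$ are degenerate lines for $\theta_i = 0$.

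I would handle this as follows. When $\theta_i = 0$, $v_i = w_i$ and the contribution of that line is zero. Otherwise, Lemma \ref{lem2.1.2} gives $(v_i,w_j) = \delta_{i,j}\cos\theta_i$, so distinct planes are orthogonal, and the remaining orthogonal complement lies in $V^\perp \cap W^\perp$. The $2 \times 2$ computation on each plane is then routine:
$$\begin{pmatrix}1&0\\0&0\end{pmatrix} - \begin{pmatrix}c^2&cs\\cs&s^2\end{pmatrix},$$
with $c = \cos\theta_i$ and $s = \sin\theta_i$, has eigenvalues $\pm s$.

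If one prefers to stay within the Hausdorff-distance description of Lemma \ref{lem2.1.3}, one can instead note that $\sin\theta(V,W) = \max_{v \in V,\,|v|=1} d(v,W)$. The projection identity is still the simplest way to transfer this to orthogonal complements.
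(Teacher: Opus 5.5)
Your proof is correct, and it takes a different route from the paper.

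\textbf{The paper's route.} The paper stays with the Jordan bases of Lemma \ref{lem2.1.2}. For each $i$ with $\theta_i>0$ it forms
$v_i'=(w_i-\cos\theta_i\,v_i)/\sin\theta_i\in V^\perp$ and $w_i'=(v_i-\cos\theta_i\,w_i)/\sin\theta_i\in W^\perp$.
These span subspaces $V_1\subset V^\perp$ and $W_1\subset W^\perp$ with the same nonzero principal angles as $(V,W)$. From this the paper concludes $\theta(V^\perp,W^\perp)\le\theta(V,W)$. It gets the reverse inequality by applying the same construction to $(V^\perp,W^\perp)$.

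\textbf{Your route.} You identify the metric with the gap, $\sin\theta(V,W)=\|P_V-P_W\|$. Invariance under complements then follows in one line from $P_{V^\perp}-P_{W^\perp}=-(P_V-P_W)$, with no symmetry step needed.

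\textbf{What each buys.} The paper's construction is more explicit: it produces principal vectors for the complements. However, its step $\theta(V^\perp,W^\perp)\le\theta(V_1,W_1)$ needs an observation the paper does not state. One must note that $V^\perp=V_1\oplus(V+W)^\perp$ and $W^\perp=W_1\oplus(V+W)^\perp$, so the extra directions contribute only zero angles. There is also a small sign slip: $(v_i',w_i')=-\cos\theta_i$, so $w_i'$ should be replaced by $-w_i'$. Your argument puts all the case analysis into the $2\times 2$ block computation. That computation already covers $\theta_i=\pi/2$ (take $c=0$, $s=1$), so the case you flagged as a difficulty needs no separate treatment. Moreover, the spectrum of $P_V-P_W$ is $\{\pm\sin\theta_i:\theta_i>0\}$ together with zeros, and negating it leaves it unchanged. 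So your approach also recovers the paper's stronger by-product: the nonzero principal angles of $(V,W)$ and $(V^\perp,W^\perp)$ coincide with multiplicity, not just the largest ones.
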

\begin{proof}
$\theta(V,W) = 0$ implies $V = W,$ hence 
$\theta(V^\perp.W^\perp) = 0.$ Otherwise 
there exists a smallest $1 \leq k \leq n$ such that
$\theta_k > 0.$ For $k \leq i \leq n$ define 
$$
 v_i^\prime := (w_i - P_V(w_i))/\sin \theta_i 
= (w_i - \cos \theta_i \, v_i)/\sin \theta_i,
$$
$$
 w_i^\prime := (v_i - P_W(v_i))/\sin \theta_i 
= (v_i - \cos \theta_i \, w_i)/\sin \theta_i.
$$
(\ref{eq2.1.1}) implies that 
$\{v_k^\prime,...,v_n^\prime\}$ and
$\{w_k^\prime,...,w_n^\prime\}$
are orthonormal bases for subspaces $V_1 \subset V^\perp$
and $W_1 \subset W^\perp,$ respectively,
and
$(v_i^\prime,w_j^\prime) = \delta_{i,j}\, \cos \theta_i.$ 
Hence 
\begin{equation}\label{eq2.1.2}
 \theta(V^\perp,W^\perp) \leq \theta(V_1,W_1) = \theta(V,W).
\end{equation} 
In (\ref{eq2.1.2}) we replace $V$ by $V^T$ 
and $W$ by $W^\perp$ to obtain
$\theta(V,W) \leq \theta(V^\perp,W^\perp),$ 
which implies 
$\theta(V^\perp,W^\perp) = \theta(V,W).$
\end{proof}
\begin{prop}\label{prop2.1.1} 
If $L \subset \mathbb Z^m$ is not $k$-dense, then
there exists a nonempty open $\mathcal U \subset L_{n,disc}$ 
and $c > 0$ such that 
\begin{equation}\label{eq2.1.3}
	|P_W(L) \cap B(0,r)| < 1 + c\, r^m, \ \ W \in \mathcal U.
\end{equation}
\end{prop}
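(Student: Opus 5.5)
The plan is to turn the failure of $k$-density into a uniform angular separation between $L\backslash\{0\}$ and the complements $W^\perp$, and then count lattice points. Since $L$ is not $k$-dense, there is a nonempty open $O \subset Gr(k,m)$ with $L \cap C(O) = \emptyset$. Using the basis of open sets $A(\cdot,\cdot)$, I would shrink $O$ to a ball $A(V_0,2\alpha) \subset O$ with $V_0 \in Gr(k,m)$ and $\alpha \in (0,\pi/4)$. I then define
$$\mathcal U := \{W \in Gr(n,m) : \theta(W^\perp, V_0) < \alpha\}.$$
This set is nonempty because $V_0^\perp \in \mathcal U$. It is open because $W \mapsto W^\perp$ is an isometry (Lemma \ref{lem2.1.4}).

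The key step, and the one I expect to be the main obstacle, is the following claim. For $W \in \mathcal U$ and $\ell \in L\backslash\{0\}$,
$$|P_W(\ell)| \geq \sin\alpha \, |\ell|.$$
To prove it, suppose instead that $|P_W(\ell)| < \sin\alpha\,|\ell|$. Put $u := \ell/|\ell|$, and let $\beta < \alpha$ be the angle between $u$ and $W^\perp$, so that $\sin\beta = |P_W(u)|$. Let $p$ be the unit vector in the direction of $P_{W^\perp}(u)$, which is nonzero since $\beta<\pi/2$. Let $g \in SO(m,\mathbb R)$ be the rotation by angle $\beta$ in the plane $\mathbb R(\{u,p\})$ that fixes the orthogonal complement of this plane and sends $p$ to $u$. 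Then $g$ moves every unit vector by at most $\beta$. By the Hausdorff-distance description of $\theta$ in Lemma \ref{lem2.1.3}, $V := gW^\perp$ satisfies $\theta(V,W^\perp) \le \beta < \alpha$, hence $\theta(V,V_0) < 2\alpha$, so $V \in O$. But $\ell \in V\backslash\{0\}$, so $\ell \in C(O) \cap L$, a contradiction. The details to check are that the rotation bound really controls $\theta$ through the Hausdorff metric, and that the degenerate case $u \in W^\perp$ (where $\beta = 0$ and $V = W^\perp$) is handled directly.

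From the claim the rest is routine.

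First, $P_W$ is injective on $L$ and $P_W(\ell) \neq 0$ for $\ell \neq 0$.

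Second, if $P_W(\ell) \in B(0,r)$, then $|\ell| < r/\sin\alpha$. Hence
$$|P_W(L) \cap B(0,r)| \leq |\mathbb Z^m \cap B(0, r/\sin\alpha)|.$$
This count is finite, so $P_W(L)$ is discrete and $\mathcal U \subset L_{n,disc}$.

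Third, for the bound (\ref{eq2.1.3}): if $r < \sin\alpha$, the only point is $0$, so the count is $1 < 1 + c\,r^m$. If $r \geq \sin\alpha$, then $|\mathbb Z^m \cap B(0,R)| \leq (2R+1)^m \leq (3R)^m$ for $R = r/\sin\alpha \geq 1$. So $c := (3/\sin\alpha)^m$ works, with strict inequality once the $1$ is added.
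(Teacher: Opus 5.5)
Your proposal is correct and follows essentially the paper's route: shrink to a ball in $Gr(k,m)$ whose cone misses $L$, take $\mathcal U$ to be the complements of a smaller concentric ball, get a uniform cone inequality separating $L\backslash\{0\}$ from $W^\perp$, and count lattice points. You count in the ball $B(0,r/\sin\alpha)$ where the paper uses the cylinder $W^\perp\cap B(0,R)+W\cap B(0,r)$, and your rotation argument makes rigorous the ``trigonometric computation'' that the paper only asserts. One harmless slip: the claim does not give injectivity of $P_W$ on $L$, since a difference $\ell-\ell'$ of elements of $L$ need not lie in $L$ and can lie in $W^\perp$; you never use it, because $|P_W(L)\cap B(0,r)|\le|\{\ell\in L:|\ell|<r/\sin\alpha\}|$ holds regardless.
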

\begin{proof}
If $L$ not $k$-dense, then there exists \,
$V \in Gr(k,m)$ and $\alpha \in (0,\pi/2]$ 
with
$L \cap C(A(V,\alpha)) = \emptyset.$
Let $\gamma \in (0,\alpha)$
and 
$\mathcal U := A(V,\gamma)^\perp.$
Then for $W \in \mathcal U$ and 
$x \in \mathbb R^m \backslash C(A(V,\alpha)),$
a trigonometric computation gives
\begin{equation}\label{eq2.1.4}
	|P_{W^T}(x)| <  |P_W(x)|\, (\tan \alpha - \tan \gamma)^{-1}.
\end{equation}
which implies 
\begin{equation}\label{eq2.1.5}
P_W(L) \cap B(0,r) \subset 
\mathbb Z^m \cap [W^\perp \cap B(0,R) + W \cap B(0,r)]
\end{equation}
where $R = r(\tan \alpha - \tan \gamma)^{-1}.$
This concludes the proof.
\end{proof}
\begin{lem}\label{2.1.5}
For $L \subset \mathbb Z^m$ and $p \in \mathbb Z^m,$
if $L$ is $k$-dens then $L+p$ is $k$-dense. 
\end{lem}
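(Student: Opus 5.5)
Fix a nonempty open $O \subset Gr(k,m)$. We must produce $\ell \in L$ with $\ell + p \in C(O)$, i.e.\ $\ell + p \neq 0$ and $\ell+p \in V$ for some $V \in O$. The translation by $p$ moves points off the subspaces, so a direct application of $k$-density of $L$ to $O$ is not enough. The key fact is that $C(O)$ is an open cone, and large vectors of $L$ lying in a slightly smaller cone remain in $C(O)$ after translation by the fixed vector $p$.

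First, I shrink $O$. Choose $V_0 \in O$ and $\alpha \in (0,\pi/2)$ with $A(V_0,\alpha) \subset O$, using the basis of open sets $A(W,\alpha)$. Set $O' := A(V_0,\alpha/2)$. Next I show that $L \cap C(O')$ is infinite. If it were finite, say $\{\ell_1,\dots,\ell_N\}$, then each $\ell_i$ lies in a proper subset of $Gr(k,m)$ of subspaces containing it, which is closed with empty interior. So $O'' := O' \setminus \bigcup_i \{V : \ell_i \in V\}$ is nonempty and open, and $k$-density applied to $O''$ gives a point of $L$ in $C(O'') \subset C(O')$ different from every $\ell_i$, a contradiction. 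The nonemptiness of $O''$ needs the observation that the set $\{V \in Gr(k,m) : \ell \in V\}$ is nowhere dense; this holds since it is a proper closed submanifold, or more elementarily since a small rotation fixing $V_0$'s neighbourhood moves $V$ off $\ell$. Because $L \cap C(O')$ is an infinite subset of $\mathbb Z^m$, it contains $\ell$ with $|\ell|$ arbitrarily large.

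Then comes the geometric perturbation step, which I expect to be the main obstacle. Take $\ell \in V \in O'$ with $|\ell| > R$, where $R$ is large. Let $u := \ell/|\ell|$ and $u' := (\ell+p)/|\ell+p|$; then $|u-u'| \le 2|p|/|\ell|$, so the angle between them is small. I construct a rotation $g \in SO(m,\mathbb R)$ taking $u$ to $u'$ that is close to the identity, specifically the rotation in the plane spanned by $u,u'$ by their angle. Then $gV$ contains $\ell+p$, and $\theta(gV,V)$ is at most the rotation angle, which is $O(|p|/|\ell|)$. This bound holds because a rotation by angle $\phi$ moves each unit vector by at most $\phi$, so the Hausdorff distance in Lemma \ref{lem2.1.3} is at most $\phi$. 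Choosing $R$ so that this angle is below $\alpha/2$, the triangle inequality gives $\theta(gV,V_0) < \alpha$, so $gV \in O$. Finally, $\ell + p \neq 0$ since $|\ell| > |p|$. Hence $\ell+p \in C(O) \cap (L+p)$, which proves that $L+p$ is $k$-dense.
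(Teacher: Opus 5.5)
Your proof is correct and is essentially the paper's argument: the paper shrinks $O$ to a nonempty open $O_p\subset O$ with $C(O_p)\cap L\subset C(O)-p$ and then applies $k$-density once. Your passage to $O'=A(V_0,\alpha/2)$, your exclusion of the finitely many small lattice points via the nowhere density of $\{V\in Gr(k,m):\ell\in V\}$, and your rotation bound $\theta(gV,V)\le\phi=O(|p|/|\ell|)$ for large $|\ell|$ supply exactly the details that the paper's one-line, self-referential justification leaves out; in particular, the paper's inclusion holds only if $O_p$ is chosen to avoid the lines through those small points, which your argument makes explicit.
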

\begin{proof}
An argument similar to that used in the proof of Lemma \ref{2.1.5} 
implies that for every nonempty open $O \subset Gr(k,m)$ there 
exists a nonempty open
$O_p \subset O$ such that
$$
	C(O_p) \cap L \subset C(O) - p.
$$
If $L$ is $k$-dense then tther exists $\ell \in C(O_p)$
and hence $\ell+p \in C(O).$
\end{proof}
\subsection{Application of Baire's Category Theorem}\label{sub2.3}
A subset of a topological space is called a $G_\delta$ subset 
if it is the countable intersection of dense open subsets. Clearlyy a set is a subset of the 
complement if a $G_\delta$ $\iff$ it is a countable union of nowhere dense sets. Such set are called meager. Since $L_{n,disc} \cap L_{n,lim} = \emptyset$
and subsets of meager sets are meager,
if $L_{n,lim}$ is $G_\delta,$ then $L_{n,disc}$ is meager.
\newline
\\
A topological space is called a Baire space if every $G_\delta$ subset is dense. 
In 1899 Ren\'e Baire showed that every Euclidean space has this property. 
It was shown later that locally compact spaces and complete metric spaces  
(\cite{dugundji} p. 249, Theorem 10.1), (\cite{kelly}, p. 200, Theorem 34), 
(\cite{munkres}, p. 296, Theorem 48.2), and other spaces \cite{engelking} 
are Baire spaces.
\begin{lem}\label{lem2.2.1}
$Irr(k,m) := \{U \in Gr(k,m) : r(U) = 0\}$ is a $G_\delta$ and theefore a desnse subset of $Gr(k,m).$
\end{lem}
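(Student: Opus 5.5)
Since $r(U)=0$ means $U\cap\mathbb Z^m=\{0\}$, I will write
$$
Irr(k,m) = \bigcap_{\ell \in \mathbb Z^m\backslash\{0\}} O_\ell, \qquad O_\ell := \{U \in Gr(k,m) : \ell \notin U\}.
$$
This is a countable intersection, because $\mathbb Z^m$ is countable. It then suffices to prove two things: each $O_\ell$ is open, and each $O_\ell$ is dense in $Gr(k,m)$. With that, $Irr(k,m)$ is a $G_\delta$ in the sense defined above. Its density then follows from Baire's theorem, since $Gr(k,m)$ is a compact metric space under $\theta$ (Lemma \ref{lem2.1.3}) and hence a Baire space.

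For openness I will show that the complement $F_\ell := \{U : \ell \in U\}$ is closed. Put $u := \ell/|\ell|$. Then $U \in F_\ell$ iff $d(u,U) = |P_{U^\perp}(u)| = 0$. The function $U \mapsto d(u,U)$ is continuous on $Gr(k,m)$: if $\theta(U,U') < \varepsilon$, then the Hausdorff description of $\theta$ in the proof of Lemma \ref{lem2.1.3} gives $|d(u,U)-d(u,U')| \le \varepsilon$, since the unit spheres of $U$ and $U'$ are $\varepsilon$-close. Therefore $F_\ell$ is the zero set of a continuous function and is closed.

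For density, fix $U \in F_\ell$ and $\varepsilon>0$; I need some $U' \notin F_\ell$ with $\theta(U,U')<\varepsilon$. Because $k<m$, there is a unit vector $e \perp U$. Using Lemma \ref{lem2.1.2}-style orthonormal bases, choose an orthonormal basis $u_1=u,u_2,\dots,u_k$ of $U$. Rotate $u_1$ slightly toward $e$ in the plane spanned by $u_1$ and $e$, that is, replace $u_1$ by $\cos t\,u_1+\sin t\,e$ and keep $u_2,\dots,u_k$ fixed. Call the resulting subspace $U'$. Its principal angles with $U$ are $t,0,\dots,0$, so $\theta(U,U')=t<\varepsilon$. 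Moreover $u\notin U'$, since the component of $u$ along the new first basis vector is only $\cos t<1$ and $u$ is orthogonal to the remaining basis vectors.

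I expect the main obstacle to be rigor about the topology of $Gr(k,m)$ relative to the metric $\theta$: that $\theta$ induces the homogeneous-space topology and that this space is complete. Both follow from compactness, and I will cite them rather than prove them. Alternatively, the measure-theoretic route also works: each $F_\ell$ is a lower-dimensional submanifold and so has $\sigma_k$-measure zero, which gives density as well. The argument above avoids needing that fact.
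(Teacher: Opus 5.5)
Your proof is correct and follows the paper's route: you write $Irr(k,m)$ as the complement of the countable union of the sets $\{U : \ell \in U\}$, $\ell \in \mathbb Z^m\backslash\{0\}$, show these are nowhere dense, and apply Baire's theorem. The only difference is in the nowhere-density step: the paper uses a dimension count ($S(k,\ell)\cong Gr(k-1,m-1)$ has dimension $(k-1)n<kn$), while you verify closedness and empty interior directly, via continuity of $U\mapsto d(u,U)$ and an explicit small rotation; this is more elementary and also shows each $O_\ell$ is dense open, as the paper's definition of $G_\delta$ requires.
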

\begin{proof}
For $\ell \in \mathbb Z^m \backslash \{0\},$ 
$S(k,\ell) := \{U \in Gr(k,m) : \ell \in U\}$ is diffeomorhic to $Gr(k-1,m-1)$ hence $dim \, S(k.\ell) = (k-1)n < \mbox{dim }Gr(k,m)$ so it is nowhere dense.  Therefore 
$$Irr(k,m) = Gr(k,m) 
\backslash 
\bigcup_{\ell \in \mathbb Z^m \backslash \{0\}}  S(k,\ell)
$$
is a $G_\delta$ set and Baire's theorem implies it is dense.
\end{proof}

\begin{theo}\label{thm2.2.1}
$L \subset \mathbb Z^m$ is $k$-dense iff 
$L_{n,lim}$ is a $G_\delta$ set.
\end{theo}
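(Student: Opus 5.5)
The plan is to prove the two implications separately. Throughout I read ``$G_\delta$'' as the paper defines it in \S\ref{sub2.3}, namely a countable intersection of dense open subsets. I will pass freely between $Gr(n,m)$ and $Gr(k,m)$ via $W \mapsto W^\perp$, which is a homeomorphism by Lemmas \ref{lem2.1.1} and \ref{lem2.1.4}.

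\emph{($\impliedby$) by contraposition.} Suppose $L$ is not $k$-dense. Proposition \ref{prop2.1.1} gives a nonempty open $\mathcal U \subset L_{n,disc}$. Since $L_{n,disc} \cap L_{n,lim} = \emptyset$, the set $L_{n,lim}$ misses the nonempty open set $\mathcal U$, so it is not dense. Now $Gr(n,m)$ is compact, hence a Baire space, so every $G_\delta$ subset of it is dense. Therefore $L_{n,lim}$ is not a $G_\delta$ set.

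\emph{($\implies$).} For $j \in \mathbb Z_+$ define
$$
\mathcal O_j := \{W \in Gr(n,m) : \exists\, \ell \in L \mbox{ with } 0 < |P_W(\ell)| < 1/j\}.
$$
The point $0$ is a limit point of $P_W(L)$ exactly when every punctured ball $B(0,1/j)\backslash\{0\}$ meets $P_W(L)$, so $L_{n,lim} = \bigcap_j \mathcal O_j$. Each $\mathcal O_j$ is open: for fixed $\ell$ the map $W \mapsto P_W(\ell)$ is continuous (for instance via the representation $P_W = MD^{-1}M^T$ with a locally continuous orthonormal frame), and $\{x : 0<|x|<1/j\}$ is open. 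So $\mathcal O_j$ is a union of preimages of open sets.

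It remains to show that each $\mathcal O_j$ is dense, and this is where $k$-density enters. Let $\mathcal V \subset Gr(n,m)$ be nonempty open and set $O := \mathcal V^\perp$, a nonempty open subset of $Gr(k,m)$. By $k$-density there are $V \in O$ and $\ell \in L \cap V$ with $\ell \neq 0$. Put $W := V^\perp \in \mathcal V$; then $P_W(\ell) = 0$. Next I perturb $W$. By continuity there is a neighborhood $\mathcal N \subset \mathcal V$ of $W$ on which $|P_{W'}(\ell)| < 1/j$. The set $\{W' : P_{W'}(\ell) = 0\} = \{W' : \ell \in W'^\perp\}$ is the image under $\perp$ of $S(k,\ell)$, which is nowhere dense as shown in the proof of Lemma \ref{lem2.2.1}. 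Hence $\mathcal N$ contains some $W'$ with $0 < |P_{W'}(\ell)| < 1/j$, so $W' \in \mathcal O_j \cap \mathcal V$. Thus $\mathcal O_j$ is dense, and $L_{n,lim}$ is a $G_\delta$ set.

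The main obstacle I expect is the punctured condition $P_W(\ell) \neq 0$. $k$-density directly produces only lattice points with $P_W(\ell) = 0$, which do not witness a limit point. The perturbation step above, which relies on the nowhere density of $S(k,\ell)$, is what converts these exact zeros into small nonzero projections. A secondary point to make explicit is that the $(\impliedby)$ direction depends on the paper's convention that $G_\delta$ sets are dense, combined with Baire's theorem on the compact manifold $Gr(n,m)$.
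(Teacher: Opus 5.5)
Your proof is correct. The $(\impliedby)$ direction is the paper's argument in contrapositive form: Proposition \ref{prop2.1.1} plus Baire. The paper phrases it as $L_{n,disc}$ being meager yet having nonempty interior.

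The $(\implies)$ direction takes a different route.

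\textbf{The paper's route.} It works on $Gr(k,m)$ with the unpunctured open sets $L_{k,1/j}$, whose density follows immediately from $k$-density. It handles the puncture globally by intersecting with $Irr(k,m)$, whose density is the Baire-category fact in Lemma \ref{lem2.2.1}. The $\perp$-image $\mathcal W_n$ of $\mathcal U_k$ is then a dense $G_\delta$ contained in $L_{n,lim}$, on which no nonzero lattice point projects to $0$.

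\textbf{Your route.} You write $L_{n,lim}=\bigcap_j \mathcal O_j$ exactly, using punctured open sets. You handle the puncture locally, by perturbing off the single closed nowhere dense set $S(k,\ell)^\perp$. As a result this direction needs no Baire argument at all.

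\textbf{What each buys.} Your version proves the literal statement under the paper's definition of $G_\delta$. The paper's text only exhibits a dense $G_\delta$ inside $L_{n,lim}$ and then asserts that $L_{n,lim}$ is $G_\delta$. That step needs exactly your observation: $L_{n,lim}$ is the intersection of the open sets $\mathcal O_j$, and each $\mathcal O_j$ contains $\mathcal W_n$ and is therefore dense. The paper's route, in turn, yields the explicit set $\mathcal U_k$, which is reused in \S3 for the measure-theoretic analysis.
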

\begin{proof} 
For $\ell \in L$ define
$d_\ell : Gr(k,m) \mapsto [0,\infty)$ by
$d_\ell(U) := |P_{U^\perp}(\ell)|.$
Then $d_\ell(U) = |\ell - P_U(\ell)|$ us the distance from $\ell$ to $U.$ 
Since $d_\ell$ is continuous
$$
\ell_{k,\epsilon} := \{U \in Gr(k,m) : |P_{U^\perp}(\ell)| < \epsilon\}, \ \ \epsilon > 0
$$
are open sets. We will prove equivalence of the following three assertions.
\begin{enumerate}
\item If $L$ is $k$-dense.
\item For all  $\epsilon > 0$
$$
	L_{k,\epsilon}:= \bigcup_{\ell \in L \backslash \{0\}} \ell_{k,\epsilon}
$$ 
is a dense open subset of $Gr(k,m).$
\item $L_{n,lim}$ is a $G_\delta$ set.
\end{enumerate}
$1 \implies 2.$ Let $\epsilon > 0$ and 
$O \subset Gr(k,m)$ be nonempty and open. It suffices to prove that 
$O \cap L_{k,\epsilon} \neq \emptyset.$ 
Since $L$ is $k$-dense there exists $\ell \in C(O) \cap L.$ Since
$C(O) = \bigcup_{V \in O} V\backslash \{0\},$ there exists 
$V \in O$   such that
$\ell \in V \cap L \backslash \{0\}.$
Then $d_\ell(V) = 0$ hence $V \in O \cap L_{k,\epsilon}.$
\newline
$2 \implies 3.$ Since each $L_{k,1/j}, \, j \in \mathbb Z_+$
is a dense open subset of $Gr(k,m),$ and $Irr(k,m)$ is $G_\delta,$ 
\begin{equation}\label{eq2.2.2}
	\mathcal U_k := Irr(k,\mathbb R) \cap \bigcap_{j = 1}^\infty L_{k,1/j}.
\end{equation} 
is $G_\delta.$ Then Lemma \ref{lem2.1.4}
implies that $\mathcal W_n := \mathcal U_k^\perp$
is $G_\delta.$ 
There exists a sequence $\ell_j \in L$ such that for every 
$W \in  \mathcal W_n,$ $0 < |P_W(\ell_j)| < 1/j$ 
hence
$\mathcal W_n \subset L_{n,lim}$ so 
$L_{n,lim}$ is a $G_\delta$ set,
\newline
$3 \implies 1.$ $L_{n,disc}$ is meager since it is a subset
ofAss $Gr(n,m) \backslash L_{n,lim}$ and $L_{n,lim}$ is a $G_\delta$ set.
If $L$ were not $k$-dense then Proposition \ref{prop2.1.1} would imply that 
$L_{n,disc}$ has a nonem[ty interior contradicting its being meager.
\end{proof}
\section{Diophantine Properties of Lattice Subset Projections}\label{sec4}
We prove results analogous to those in \S2 
where the size of $L$ is measured by a lacunarity condition and size of subsets of $Gr(n,m)$
is defined by $\sigma_n.$
\subsection{Pl\"{u}cker's Embedding}\label{sub4.1}
Let $[m] := \{1,...,m\},$ 
$\mathcal I := \{I \subset [m] : |I| = n\},$ 
and for $I \in \mathcal I,$ 
$I^c := [m] \backslash I.$
and for
$M \in \mathbb R^{m \times n},$
$M_I :=$ $n$ by $n$ submatix
of $M$ having rows numbers in $I,$ 
$\det M_I$ is the $I$-the minor of
$M.$
Every $U \in Gr(n,m)$ has the representation 
$M\mathbb R^n$
where $M \in \mathbb R^{m \times n}$ has rank $n$ 
and $M$ is unique up to  multiplication by a matrix in
$GL(n,\mathbb R),$ hence the followimg collection of sets is well defined
\begin{equation}\label{eq2.4}
U_I := \{U \in Gr(n,m) : U = M\mathbb R^n, 
\det M_I \neq 0\}, \ \ I \in \mathcal I
\end{equation}
forms an open cover of $Gr(n,m),$ and 
$F_I : U_I \mapsto \mathbb R^{k \times n},$ well defined by
\begin{equation}\label{eq2.5}
	F_I(U) := M_{I^c}M_I^{-1}, \ \  
		U = M\mathbb R^n
\end{equation}
are bijections with real analytic  transition functions 
$F_J \circ F_I^{-1} : \mathbb R^{k \times n} 
\mapsto \mathbb R^{k \times n},$ so
$Gr(n,m)$ is a  $kn$-dimensional  real analytic manifold.
Let $b := \binom{m}{n}.$ 
$\mathbb P_{b-1}$ is compact since it is the image of a
$(b-1)$-dimensional sphere under the map identifying  antipodal points. 
To show that $Gr(n,m)$ is compact, it suffices to show injectivity of the Pl\"{u}cker  map  
$p : Gr(n,m) \mapsto \mathbb P_{b-1},$ well defined by
\begin{equation}\label{eq2.6}
	p(U) := [\det M_{I_1},...,\det M_{I_b}]^T \, \mathbb R, \ \  U = M\mathbb R^n
\end{equation}
where  $I_1,...,I_b$ is lexicographical
ordering of $\mathcal I.$ 
If $p(U) = p(V)$ with $U = M\mathbb R^n$ and $V \ N\mathbb R^n$
then then for some $I \in \mathcal I$ we can choose $M$ 
and $N$ so $M_I = N_I = I_n.$ For $\alpha \in I^c$ and $\beta \in [n]$
let $K \in \mathcal I$ be obtained from 
$I = \{1 \leq i_1 < \cdots < i_N \leq m\}$ by replacing $i_\beta$ by $\alpha.$
Then 
\begin{equation}\label{eq2.7}
	M_{\alpha,\beta} = (-1)^{\alpha-i_\beta} \det M_K = 
(-1)^{\alpha-i_\beta} \det N_K = N_{\alpha,\beta}
\end{equation}
so $M = N$ and $U = V.$
\begin{remark}
Our proof that $p$ is injective elucidates
(\cite{miller}, Proposition 14.2).  
(\cite{miller}, Secttion 14.2)
proves that 
$p(Gr(n,m))$ is a real algebraic variety 
defined by a system of homogeneous
quadratic equations .  
\end{remark}
\subsection{Application of Khintchine-Grosev's Theorem}\label{sub4.2}
In this section 
$\nu$ is Lebeqgue measure on $\mathbb R^{k \times n},$ and
$\sigma_n$ is the unique rotational invariant measure on $Gr(n,m).$
A subset is called null if its measure equals zero.
We examine conditions on $L$ that imply
$\mathcal U_k,$
defined by (\ref{eq2.2.2}), is null or its 
complement is null. If $L$ is $k$-dense 
then the later condition implies that 
$L_{n,disc}$ is null.
We uses a classic result in Diophantine approximation
and a parametrization related to the Pl\"{u}cket embedding
of $Gr(n,m).$
\newline 
\\
Define $||s|| := \min_{\, t \in \mathbb Z} |s-t|, \ \ s \in \mathbb R,$
$||v|| := \max_{j = 1}^n ||v_j||, \ v \in \mathbb R^n,$
and for $\psi : \mathbb Z_+\mapsto \mathbb R_+$ define
\begin{equation}\label{eq2.8}
	\mathcal A_\psi	:= 
	\{A \in  [0,1]^{n \times k} : \mbox{there exist
infinite many } q \in \mathbb Z^k \text{ with } ||Aq|| < \psi(|q|)\, \}.
\end{equation}
The Khintchine-Grosev theorem (\cite{beresnevich}, p. 2), 
proved for $n = 1$ in 1926 by Khintchine \cite{khintvhine}
and for $n \geq 2$ in 1938 by Groshev \cite{groshev}, implies that
\begin{equation}\label{eq2.9}
\mu(\mathcal A_\psi) = 
\begin{cases}
  0 \text{ if } \sum_{p = 1}^\infty p^{k-1} \psi(p)^n < \infty \\
  1 \text{ if } \sum_{p = 1}^\infty p^{k-1} \psi(p)^n = \infty.
\end{cases}
\end{equation}
%
%
Let $\mathcal Q := \{q = 
[q_1,...,q_m]^T \in \mathbb Z^m : 
gcd(q_1,...,q_m) = 1\, \}.$
Every $\ell \in \mathbb Z^m$
admits a unique factorization $\ell = kq$ where
$k \in \mathbb Z_+$ and $q \in \mathcal Q.$
%
%
\begin{theo}\label{thm3.2.1}
If $\ell_j  = k_j q_j : k_j \in \mathbb Z_+, q_j \in \mathcal Q,$
$L := \{\ell_j : j \in \mathbb Z_+\}$  is  $k$-dense,
and $\psi : \mathbb Z_+ \mapsto \mathbb R_+$is defined by
\begin{equation}\label{eq2.10}
				  \psi(p) := \
sup \{ k_j^{-1} : |\, [q_{j,1},...,q_{j,k}]^T \,| \geq p\},
\end{equation}
then
\begin{enumerate}
\item $\sum_{p = 1}^\infty p^{k-1} \psi(p)^n < \infty$ implies
$\mu_n(L_{n,lim}) = 0,$
\item
$\sum_{p = 1}^\infty p^{k-1} \psi(p)^n = \infty$ implies
$\mu_n(Gr(n,m) \backslash L_{n,lim}) = 0.$
\end{enumerate}
\end{theo}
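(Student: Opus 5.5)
The plan is to transfer the problem to the affine chart $U_I$ of \S\ref{sub4.1}, where $\sigma_n$ is equivalent to Lebesgue measure. Then I reduce the condition ``$0$ is a limit point of $P_W(L)$'' to a Diophantine condition on a matrix $A \in \mathbb R^{n\times k}$, and apply (\ref{eq2.9}).

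\textbf{Step 1: coordinates.} Since the charts $U_I$, $I\in\mathcal I$, cover $Gr(n,m)$, it suffices to prove both null-set statements inside one chart. By symmetry in the coordinates, take the chart with $I=\{k+1,\dots,m\}$. The transition maps are real analytic and $\sigma_n$ is a smooth rotation invariant measure, so inside the chart $\sigma_n$ and the Lebesgue measure $\nu$ on $\mathbb R^{k\times n}$ have the same null sets. Integer translations of $A$ do not change the conditions below, so we may further reduce to $A\in[0,1]^{n\times k}$.

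I parametrize $W^\perp$ rather than $W$. Write $W^\perp=\{[x;Ax] : x\in\mathbb R^k\}$. Then $|P_W(\ell)| = d(\ell,W^\perp)$, and for $\ell=(x,y)\in\mathbb Z^k\times\mathbb Z^n$ in a compact part of the chart,
\[
d(\ell,W^\perp)\asymp |y-Ax|.
\]
Hence $0$ is a limit point of $P_W(L)$ iff there are infinitely many $\ell_j$ with $0<|y_j-Ax_j|<\epsilon$ for every $\epsilon$.

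\textbf{Step 2: factor out $k_j$.} Write $\ell_j=k_jq_j$ with $q_j=(x_j',y_j')$. Then $|k_j(y_j'-Ax_j')|$ is small iff $|y_j'-Ax_j'| \lesssim \epsilon/k_j$. So the condition is essentially the approximability
\[
\|Ax_j'\| < \epsilon\,\psi(|x_j'|)
\]
for infinitely many $j$. Here $\psi$ in (\ref{eq2.10}) is exactly the monotone envelope of $k_j^{-1}$ indexed by $|x_j'|=|[q_{j,1},\dots,q_{j,k}]^T|$.

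Two side points need handling:
\begin{itemize}
\item The factor $\epsilon$ does not affect convergence or divergence of $\sum p^{k-1}\psi(p)^n$.
\item The case $Ax_j'=y_j'$ exactly only happens on the null set of $A$ with $r(W^\perp)>0$, which is the complement of $Irr$ and is null by Lemma \ref{lem2.2.1}'s dimension count.
\end{itemize}

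\textbf{Step 3: the two cases.}

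\emph{Convergence.} $L_{n,lim}$ (in the chart) is contained in $\mathcal A_{\psi}$ up to a null set, so (\ref{eq2.9}) gives $\nu(\mathcal A_\psi)=0$, and hence $\sigma_n(L_{n,lim})=0$.

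\emph{Divergence.} The Khintchine–Groshev theorem gives infinitely many $q\in\mathbb Z^k$ with $\|Aq\|<\epsilon\psi(|q|)$ for a.e. $A$, and for every $\epsilon=1/i$. Intersect over $i$. Then I need to produce actual elements $\ell_j\in L$.

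\textbf{Main obstacle.} The divergence case is the hard step, because $\psi$ is a sup over $j$ while Khintchine–Groshev produces arbitrary $q$, not the specific directions $x_j'$ occurring in $L$. A generic $q$ need not be the first block of any primitive $q_j$ attached to a large $k_j$.

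My first attempt is to use $k$-density of $L$ together with a Borel–Cantelli/quasi-independence argument restricted to the subsequence of $L$. Concretely, I would apply the divergence Borel–Cantelli lemma directly to the sets
\[
E_j=\{A:\ |y_j'-Ax_j'|<\epsilon/k_j\},
\]
whose measures are $\asymp (\epsilon/k_j)^n|x_j'|^{-n}$ locally. I would then use the pairwise quasi-independence estimates from Groshev's proof, in the form of Beresnevich's monotonicity-free version cited in \cite{beresnevich}.

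If that is insufficient, I would interpret the theorem's hypothesis as intended to apply when $L$ contains, for each relevant $x$, a representative with the sup-attaining $k_j$. For instance, this holds for the $k$-dense examples constructed later. The inclusion $\mathcal A_\psi\subset L_{n,lim}$ would then be immediate from Step 2.
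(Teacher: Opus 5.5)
\textbf{Part 1.} This is the paper's argument: the same graph chart over $I=\{k+1,\dots,m\}$, with your $A=-F_I(W)^T$. The other steps also match:
the bound $|P_W(\ell_j)|\asymp k_j|y_j'-Ax_j'|\ge k_j\|Ax_j'\|$ with $k_j^{-1}\le\psi(|x_j'|)$, reduction modulo $\mathbb Z^{n\times k}$, and the convergence half of (\ref{eq2.9}).
One repair: the reduction to a single chart is not ``by symmetry'', because $\psi$ singles out the first $k$ coordinates. Use instead, as the paper does, that $Gr(n,m)\backslash U_I$ is $\sigma_n$-null.

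\textbf{Part 2.} The genuine gap is in part~2, exactly at the obstacle you flagged, and the paper's ``similar argument'' does not close it either. Khintchine--Groshev supplies arbitrary $x\in\mathbb Z^k$. What you need is an element of $L$ whose first block is $x$, whose second block is the integer vector nearest $Ax$ (which depends on $A$), and whose multiplier satisfies $k_j\lesssim 1/\psi(|x|)$.

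Neither of your remedies supplies this. The divergence Borel--Cantelli route needs $\sum_j (k_j|x_j'|)^{-n}=\infty$. That sum is not controlled by $\psi$, which records only the best $k_j$ beyond each radius and not how many $j$ there are; $k$-density does not help. Your fallback changes the hypothesis. Even then, one representative per $x$ is not enough, since the required $y$ varies with $A$.

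In fact no proof exists, because part~2 is false. Take $k_j=1$ for all $j$. For a countable base $O_1,O_2,\dots$ of $Gr(k,m)$, choose a primitive $q_j\in C(O_j)$ with $|q_j|\ge 2^j$; this is possible because open cones contain primitive vectors of arbitrarily large norm. Then $L$ is $k$-dense and $\psi\equiv 1$, so $\sum_p p^{k-1}\psi(p)^n=\infty$.

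On the other hand, $\sigma_n\{W:|P_W(q_j)|<1\}=O(|q_j|^{-n})$, which is summable. To see the bound, note that by rotation invariance this set has the same measure as $\{W:|P_W(e)|<|q_j|^{-1}\}$ for a fixed unit vector $e$. That set is essentially a $|q_j|^{-1}$-neighbourhood of the codimension-$n$ submanifold $\{W: W\perp e\}$. The convergence Borel--Cantelli lemma now shows that for $\sigma_n$-a.e.\ $W$ the set $P_W(L)\cap B(0,1)$ is finite. Hence $0$ is not a limit point, and $\sigma_n(L_{n,lim})=0$.

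A correct divergence statement needs a stronger hypothesis, as in Corollary~\ref{cor3.2.1} with $\mathcal Q_1=\mathcal Q$. For \emph{every} primitive $q=(x,y)$ with $|y|\lesssim|x|$, $L$ must contain some $k(q)q$ with $k(q)\lesssim 1/\psi(|x|)$. Under that hypothesis your Step~2 does give $\bigcap_i(\mathcal A_{\psi/i}+\mathbb Z^{n\times k})\subset L_{n,lim}$ up to a null set in the chart. Non-primitive solutions of $\|Ax\|<\psi(|x|)/i$ are handled by passing to their primitive parts, which works because $\psi$ is nonincreasing.
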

\begin{proof}
Let $I := \{k+1,...,m\}.$ Since $\mu_n(Gr(n,m) \backslash U_I) = 0$ and
the diffeomorphism $F_I : U_I \mapsto \mathbb R^{k \times n}$ gives
a bijection between null subsets of $U_I$ and null subsets of $\mathbb R^{k \times n},$
it suffices to prove the first assertion with $\mu_n(L_{n,lim})$ replaced by
$\mu(F_I(U_I \cap L_{n,lim})),$ and  to prove the second assertion with
$\mu_n(Gr(n,m) \backslash L_{n,lim})$ replaced by
$\mu(F_I(U_I \backslash L_{n,lim})).$ To prove the first assertion assume that
$V \in U_I \cap L_{n,lim}$ and $V = M\mathbb R^n$ where $M \in \mathbb R^{m \times n}.$
Without loss of generality we may assume that $M^TM = I_n.$
Then $|P_V(q_j)| = |M^Tq_j|$ and
for every $\epsilon > 0$ there exists $j \in \mathbb Z_+$ with 
$0 < |M^Tq_j)| < \epsilon \, k_j^{-1}.$ Since $\epsilon > 0$ is arbitrary we can replace
$M$ by $MM_I^{-1}.$ Then $(MM_I^{-1})^T = [F_I(V)^T \, I_n]$ hence (\ref{eq2.10}) gives
$$
				  ||F_I(V)^T\, [q_{j,1},...,q_{j,k}]^T|| \leq 
				|\, F_I(V)^T\, [q_{j,1},...,q_{j,k}]^T + [q_{j,k+1},...,q_{j,m}]^T\, | 
$$
$$ 
= |M^Tq_j)| < \epsilon \, k_j^{-1} 
\leq \epsilon \psi(\, |\, [q_{j,1},...,q_{j,k}]^T \,| \,).
$$
1 follows from the first equality in 
(\ref{eq2.9}) since
$$
F_I(V)^T \in \mathbb R^{n \times k} = 
[0,1]^{n \times k} + \mathbb Z^{n \times k}.
$$ 
2  follows from the second equality in 
(\ref{eq2.9}) by a similar argument.
\end{proof}
The next result follows from the defintion of $k$-dense and Theorem \ref{thm3.2.1}.
\begin{cor}\label{cor3.2.1}
Asume 
$\mathcal Q_1 = \{q_j: j \in \mathbb Z_+\} \subset \mathcal Q,$
$k_j := |q_j|^\gamma,$ $\gamma \geq 0,$ and
$L := \{k_jq_j: j \in \mathbb Z_+\}.$ Then
$\mathcal Q_1$ is $K$-denes iff $\mathcal Q$ is. 
If $\gamma > k,$ then $\sigma_n(L_{n,lim}) = 0$ 
If $\mathcal Q_1 = \mathcal Q$ and $\gamma \leq k,$
then $\sigma_n(Gr(n,m) \backslash L_{n,lim}) = 0$ and 
$\sigma_n(L_{n,disc}) = 0.$
\end{cor}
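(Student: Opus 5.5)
The plan is to read off each claim of the corollary from the definition of $k$-density and from Theorem \ref{thm3.2.1}, by estimating the function $\psi$ of (\ref{eq2.10}) from above (for the null case) and from below (for the full-measure case). For the case $\gamma \leq k$, a monotonicity argument reduces everything to the endpoint $\gamma = k$.

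\emph{Step 1: $k$-density.} I will use that $k$-density depends only on the lines $\mathbb R\ell$, $\ell \in L\backslash\{0\}$. Indeed, $\ell \in C(O)$ iff $c\,\ell \in C(O)$ for any $c > 0$, because $C(O)$ is a union of punctured subspaces. Since $k_j > 0$, the set $L$ meets $C(O)$ iff $\mathcal Q_1$ does. Hence $L$ is $k$-dense iff $\mathcal Q_1$ is, which is what makes Theorem \ref{thm3.2.1} applicable in the remaining steps.

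\emph{Step 2: $\gamma > k$.} Write $q' := [q_1,\dots,q_k]^T$. Since $|q| \geq |q'|$, the condition $|q_j'| \geq p$ forces $k_j^{-1} = |q_j|^{-\gamma} \leq p^{-\gamma}$, so $\psi(p) \leq p^{-\gamma}$. The series in part 1 of Theorem \ref{thm3.2.1} is then dominated by $\sum_p p^{k-1-\gamma n}$. This converges once $\gamma n > k$, which holds for $\gamma > k \geq k/n$. Part 1 of Theorem \ref{thm3.2.1} then gives $\sigma_n(L_{n,lim}) = 0$. I would also record that the null property is unchanged when the Lebesgue measure $\mu$ on the chart is exchanged for $\sigma_n$, as in the first lines of that proof.

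\emph{Step 3: $\mathcal Q_1 = \mathcal Q$ and $\gamma \leq k$.} First I would reduce to the endpoint using monotonicity in the multipliers. If $0 < |P_W(k_jq_j)| < \epsilon$ with $k_j = |q_j|^{\gamma}$, then for $\gamma' \leq \gamma$ we get $0 < |P_W(|q_j|^{\gamma'}q_j)| \leq |P_W(k_jq_j)|$. So $L^{(\gamma)}_{n,lim} \subset L^{(\gamma')}_{n,lim}$, and it suffices to treat $\gamma = k$.

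For the lower bound on $\psi$ I use that $\mathcal Q_1 = \mathcal Q$ contains the primitive vector $q = [p,1,0,\dots,0]^T$ when $k \geq 2$, or $[p,0,\dots,0,1]^T$ when $k = 1$. This vector has $|q'| \geq p$ and $|q| \leq p+1$, so $\psi(p) \geq (p+1)^{-\gamma}$. Part 2 of Theorem \ref{thm3.2.1} then gives $\sigma_n(Gr(n,m)\backslash L_{n,lim}) = 0$. Finally, $L_{n,disc} \cap L_{n,lim} = \emptyset$ gives $\sigma_n(L_{n,disc}) = 0$.

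\emph{Main obstacle: the divergence at $\gamma = k$.} With the bound $\psi(p) \geq (p+1)^{-k}$, the series $\sum_p p^{k-1}\psi(p)^n$ is only comparable to $\sum_p p^{k-1-kn}$. This diverges automatically when $n = 1$, but for $n \geq 2$ the test vectors above do not suffice. The plan is to exploit the freedom in the choice of chart $I$ in the proof of Theorem \ref{thm3.2.1}, together with the sup in (\ref{eq2.10}), to obtain the stronger bound $\psi(p) \gtrsim p^{-k/n}$. The first thing I would try is to recount primitive vectors in a shell $|q'| \approx p$ against the Khintchine--Groshev normalisation, and to compare with the multiplier $|q|^{\gamma}$ measured only in the $n$ complementary coordinates. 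Verifying that the exponent in the statement, $\gamma \leq k$, is exactly compatible with (\ref{eq2.9}) is the step I expect to be hardest.
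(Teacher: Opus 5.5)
Your Steps 1 and 2 are sound, and so is Step 3 when $n=1$. In Step 1 you read the garbled ``$\mathcal Q_1$ is $k$-dense iff $\mathcal Q$ is'' as ``iff $L$ is'', which is the only sensible reading since $\mathcal Q$ itself is always $k$-dense.

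The gap is the final step for $n\ge 2$, and it cannot be closed. The bound $\psi(p)\gtrsim p^{-k/n}$ you hope to get at $\gamma=k$ contradicts the bound $\psi(p)\le p^{-\gamma}=p^{-k}$ that you proved yourself in Step 2. Switching the chart $I$ does not help: whichever $k$ coordinates play the role of $q'$, one still has $|q|\ge|q'|$, and null sets do not depend on the chart. Worse, your two estimates together give $\psi(p)\asymp p^{-\gamma}$, so $\sum_p p^{k-1}\psi(p)^n$ converges precisely when $\gamma>k/n$. For $n\ge 2$ and $k/n<\gamma\le k$, part 1 of Theorem \ref{thm3.2.1} therefore gives $\sigma_n(L_{n,lim})=0$. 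This is incompatible with the claimed $\sigma_n(Gr(n,m)\backslash L_{n,lim})=0$, because $\sigma_n$ is a probability measure. So the last assertion of the corollary is false as stated for $n\ge 2$, and no proof of it exists.

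This conclusion does not rest on Theorem \ref{thm3.2.1}. For a unit vector $u$, $|P_Wu|^2$ is Beta$(n/2,k/2)$-distributed under $\sigma_n$, so $\sigma_n\{W:|P_Wu|<\eta\}\asymp\eta^n$. There are $\asymp R^m$ primitive $q$ with $R\le|q|<2R$, so $\sum_{q\in\mathcal Q}\sigma_n\{W:|q|^\gamma|P_Wq|<1\}\asymp\sum_{j}2^{j(k-\gamma n)}$, which is finite for $\gamma>k/n$. By Borel--Cantelli, for $\sigma_n$-a.e. $W$ only finitely many points of $P_W(L)$ lie in $B(0,1)$, so $0$ is not a limit point.

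The paper's one-line justification (``follows from the definition of $k$-dense and Theorem \ref{thm3.2.1}'') glosses over exactly this exponent. What the argument supports is the threshold $k/n$ in both places:
\begin{itemize}
\item $\sigma_n(L_{n,lim})=0$ for $\gamma>k/n$;
\item when $\mathcal Q_1=\mathcal Q$, $\sigma_n(Gr(n,m)\backslash L_{n,lim})=\sigma_n(L_{n,disc})=0$ for $\gamma\le k/n$.
\end{itemize}
Your Steps 1--3, with the test vectors $[p,1,0,\dots,0]^T$, prove precisely this corrected statement, which coincides with the stated one for $n=1$. The right move was to notice that your Step 2 already refutes the endpoint $\gamma=k$, not to search for a stronger lower bound on $\psi$.

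Two smaller points:
\begin{itemize}
\item $k_j=|q_j|^\gamma$ is usually not an integer, so $L\not\subset\mathbb Z^m$. Theorem \ref{thm3.2.1} applies only after observing that its proof uses nothing but $k_j>0$.
\item In Step 2, $\mathcal Q_1$ need not be $k$-dense, so you must also note that part 1 of that theorem does not use $k$-density.
\end{itemize}
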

\section{Research Questions}\label{sec4}

This section is speculative. It states know results and poses two questions.

\subsection{Crystaliine Measures and Fourier Quasicrystals}
Let $\mu$ be a complex Radon measure on $\mathbb R^n$ with discrete support. Then
$
	\mu = \sum_{\lambda \in \Lambda} c(\lambda)\, \delta_\lambda
$
where$\Lambda \subset \mathbb R^n$ is discrete
and $c : \Lambda \to \mathbb C.$ 
$\mu$ is  called a crystlaline measure (CM) if it is a tempered distribution and its Foruier transform has discrete suppo.
Then
$
	\widehat \mu  =\sum_{\omega \in \Omega} a(\omega)\, \delta_\omega.
$
$\mu$ is  called a Fourier quasicrystal (FQ) if the variation measures
$
	|\mu| = \sum_{\lambda \in \Lambda} |c(\lambda)| \, \delta_\lambda
$
and
$
	|\widehat \mu|  =\sum_{\omega \in \Omega} |a(\omega)|\, \delta_\omega.
$
are also tempered distributions.
Favorov \cite{favorov2} constructed a CM $\mu$
such that $|\mu|$ is not a rempered disctributuin so
$\mu$ is not a FQ.
Henceforth we only comsider $FQ$ 
where $c(\lambda) \in \mathbb Z_+$
and 
\begin{equation}\label{eq3.1.1}
\sup_{a \in \mathbb R^n} |\Lambda \cap B(a,1)| < \infty.
\end{equation}
$\mu$ is called tte divisor of a map
$P : \mathbb C^n \mapsto \mathbb C^q, \, q \geq n$ 
whose entries are trigonometric polynomialss such if
$\Lambda = F^{-1}(0)$ (so $P$ is real-rooted)
and $c(\lambda)$ is the multiplicity of $\lambda.$ 
\begin{remark}\label{rem3.1.1}
$c(\lambda)$ is computed by integrating th
Bochner-Martineli $(2n-1)$-form over the boundary of $B(a,\epsilon)$ for sufficiently small $\epsilon$
(\cite{tsikh}, Proposition 1, p. 16) or by the logarithmic residue formula computed by integrating an $n$-form over an Grothendieck $n$-chain $\Gamma$ (\cite{tsikh}, Proposition 2, p. 27).
\end{remark}
The Dirac comb 
$\mu_D := \sum_{\lambda \in \mathbb Z^n} \delta_\lambda$
is the divisor of $P(z) := (\sin \pi z_1,...,\sin \pi z_n)$ and Poisson's summation formula implies that
$\widehat {\mu_D} = \mu_D$ hence $\mu_D$  is a FQ.
Finite sums of affine images of $\mu_D$ give trivial FQs.
Kurasov and Sarnak \cite{kurasov} proved that the divisor of every real-rooted univariate trigonometric polynomial is a Fourier quasicrystal and constructed such polynomials having the form 
$p(z) = Q(e^{i\beta_1 z_1},...,e^{i\beta_n z_m})$
where $Q$ is a Lee-Yang Laurent polynomial and
$\beta_1,...,\beta_m$ are positive and rationally 
independent. Alon, Cohen and Vinzant prove that every
real-rooted univariate trigonimetric polynomial has this form.
Ruelle \cite{ruelle} described all Lee-Yang polynomials.
Olevskii and Ulanovskii \cite{olevskii} prove that every FQ on 
$\mathbb R$ is a divisor. Alon and kummer characterized all
hypersurfaces in $\mathbb R^n$ related to essentiall 
Lee-Yang polynomials. These results provide a complete
theory of FQ on $\mathbb R.$
\newline
\\
Several people have constructed multidimensional FQ. 
 Meyer \cite{meyer3} explicitely constructed FQ on $\mathbb R^2$ and proved it was non trivial and not a tensor product of univariate FQ. In \cite{meyer4} he invented the lighthouse concept and used
a result of H"{o}rmander \cite{hormander} to construct
multidimensional FQ.
Alon, Kummer, Kurasov, and Vinzant used Lee-Yang varieties. 
Tsikh and myself used results in 
\cite{gelfondkhovanskii1, gelfondkhovanskii2}.
All thse FQ are divisors.
The  following remains unanswered.
\begin{question}\label{que4.1.1}
For $n > 1$ Is every FQ on $\mathbb R^n$ a divisor?
\end{question}
\subsection{Bohr Almost Periodic Measures}
$\mu$ is called a Bohr almost periodic (BAP) measure if
the convolution $f*\mu$ is a BAP function for all compactly supported $f \in C(\mathbb R^n).$ Then $\Lambda$ and 
$c : \lambda \to \mathbb Z_+$ constitute a BAP multiset. 
$\mu$ has toral type if  
$\mathbb Z(\Omega(\mu))$ is generated by $m < \infty$ elements.
\begin{theo}\label{thm4.2.1}
If $\mu$ is a $FQ$ then it is a BAP measure that has toral type.
\end{theo}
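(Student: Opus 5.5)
We will prove Theorem \ref{thm4.2.1} in two steps: first that a Fourier quasicrystal $\mu = \sum_{\lambda \in \Lambda} c(\lambda)\delta_\lambda$ is a Bohr almost periodic measure, and then that the group $\mathbb Z(\Omega(\mu))$ generated by its spectrum is finitely generated. For the first step, fix a compactly supported $f \in C(\mathbb R^n)$. Approximating $f$ uniformly by Schwartz functions $g$ with support in a fixed compact set changes $f*\mu$ by at most $\|f-g\|_\infty \sup_a |\mu|(a+\mathrm{supp})$, which is finite by (\ref{eq3.1.1}); since uniform limits of BAP functions are BAP, it suffices to treat Schwartz $g$. For such $g$, Fourier inversion gives
$$
g*\mu(x) = \sum_{\omega \in \Omega} a(\omega)\, \widehat g(-\omega)\, e^{2\pi i (\omega,x)}.
$$
Because $|\widehat \mu|$ is tempered, $\sum_\omega |a(\omega)|(1+|\omega|)^{-N} < \infty$ for some $N$. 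Since $\widehat g$ decays faster than any polynomial, the series converges absolutely and uniformly, so $g*\mu$ is a uniform limit of trigonometric polynomials and is therefore BAP. This uses exactly the FQ hypothesis that $|\widehat\mu|$ is tempered; the hypothesis on $|\mu|$ together with (\ref{eq3.1.1}) justifies the Poisson-type identity in the distributional sense.

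For toral type we use the hypotheses $c(\lambda) \in \mathbb Z_+$ and (\ref{eq3.1.1}). The plan is to show that the Bohr spectrum $\Omega$ lies in a finitely generated group. Since $\mu$ is a positive, integer-valued, uniformly discrete multiset measure which is BAP, we invoke the Meyer/Olevskii--Ulanovskii style argument that $\Lambda$ is a finite union of translates of cut-and-project-like sets, with the dual side obtained from $\widehat\mu$. Concretely, we will try to show that the almost periods of $\mu$ form a relatively dense set $\Lambda - \Lambda$ of \emph{exact} periods up to multiplicity. Integer-valuedness means that $\epsilon$-almost periods with $\epsilon < 1$ must match atoms exactly near any bounded window. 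From this we get that the hull of $\mu$ is a compact abelian group $\mathbb T$-like torus quotient, whose dual is $\mathbb Z(\Omega)$. Finite generation then follows once we show the hull is finite-dimensional.

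The main obstacle is this last finiteness: a general BAP measure can have spectrum generating an infinitely generated group (limit-periodic examples). Our first attempt will be to use the temperedness of both $|\mu|$ and $|\widehat\mu|$, via a Fourier quasicrystal result of Kurasov--Sarnak / Olevskii--Ulanovskii type in dimension one, and to reduce the case $n > 1$ by restricting to generic lines through the origin. Restricted to such a line, the measure projected to that line is a one-dimensional FQ and hence a divisor of an exponential polynomial with finitely many frequencies. The frequencies for $n$ independent lines then generate a finitely generated group containing $\Omega$. Here the projection results of \S2--3 may enter: the genericity of the chosen lines (a $G_\delta$, full-measure set by Theorem \ref{thm2.2.1} and Corollary \ref{cor3.2.1}) ensures that the projected supports remain discrete, which is where we expect the real difficulty to lie.
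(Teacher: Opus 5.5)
Your BAP half is essentially correct. It gives a self-contained proof of what the paper only cites (Lawton's Proposition 2), but it needs one repair. (\ref{eq3.1.1}) bounds the number of distinct atoms in unit balls, not the multiplicities $c(\lambda)$, so it does not yield $\sup_a\mu(a+K)<\infty$. Use positivity and the temperedness of $|\widehat\mu|$ instead. If $K\subset B(0,R)$ and $h(x):=e^{\pi R^2}e^{-\pi|x|^2}$, then $h\ge 1$ on $B(0,R)$, so $\mu(a+K)\le \mu*h(a)\le\sum_{\omega}|a(\omega)|\,|\widehat h(\omega)|<\infty$ uniformly in $a$.

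The genuine gap is toral type. The paper does not prove it either: it attributes it to forthcoming work of Alon and Kummer and calls it a hard problem. Your sketch does not supply the missing finiteness.

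\begin{itemize}
\item Almost periods of $\mu$ are not exact periods. Integer masses only force nearby atoms to be matched, with equal multiplicities, up to small displacements. A nontrivial FQ on $\mathbb R$, such as those of Kurasov and Sarnak, has no nonzero period at all.
\item The hull of \emph{any} BAP measure is a compact abelian group dual to $\mathbb Z(\Omega(\mu))$. So ``the hull is finite-dimensional'' is just a restatement of the conclusion.
\item Positive, integer-valued, uniformly discrete BAP measures need not have toral type. An example is $\sum_{j\in\mathbb Z}\delta_{j+\phi(j)}$ with $\phi$ a small almost periodic sequence having infinitely many rationally independent frequencies. Hence the discreteness of $\Omega(\mu)$ and the temperedness of $|\widehat\mu|$ must be used essentially here. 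In your plan they enter only through the line reduction, and that step fails.
\end{itemize}

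The line reduction fails under both readings, restriction and projection.

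\begin{itemize}
\item Restricting $\mu$ to a generic line $\mathbb R u$ through $0$ gives at most $c(0)\delta_0$, because $\Lambda$ is countable. For the countably many special lines there is no reason for the restriction to be an FQ.
\item Projecting is worse. $\Lambda$ is relatively dense, as is the support of every nonzero positive BAP measure. So for $n\ge 2$ each slab $\{x:|(x,u)|<R\}$ with $R$ large contains infinitely many atoms. Hence for every $u$ the pushforward of $\mu$ to the line assigns infinite mass to a bounded interval and is not a Radon measure. For generic $u$ its support is not even discrete.
\end{itemize}

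So there is no one-dimensional FQ to which Olevskii--Ulanovskii could be applied. Your final step about the frequencies of $n$ independent lines therefore has nothing to act on. The $n$-dimensional substitute, that every FQ is a divisor, is exactly the open Question \ref{que4.1.1}.

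Theorem \ref{thm2.2.1} and Corollary \ref{cor3.2.1} cannot help either.

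\begin{itemize}
\item They concern subsets $L\subset\mathbb Z^m$ and projections $P_W$ onto $n$-planes in $\mathbb R^m$, not lines in $\mathbb R^n$.
\item They become available for $\mu$ only after writing $\Omega(\mu)=M^TL$, i.e.\ after toral type is known, so invoking them is circular.
\item Theorem \ref{thm2.2.1} shows that for $k$-dense $L$ the set $L_{n,disc}$ is meager. Genericity therefore produces non-discrete projections, the opposite of what you wanted.
\end{itemize}
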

\begin{proof}
(\cite{lawton2}, Proposition 2, p. 12) proved that every 
$FQ$ on $\mathbb R^n$ is a BAP measure.
In a forthcoming paper Lior Alon and Mario Kummer 
proved that it has toral type (personal communication),
thus solving a hard problem.
\end{proof}
If $\mu$ is a nontrivial BAP measusure on $\mathbb R^n$ having toral type whose support $\Lambda$ is uniformly iescrete (a Delone set) and $c = 1,$ then results in \cite{lawton1} 
describe $\Lambda$ 
and $\Omega(\mu)$ as follows:
There exists
$m > n,$ a rank $n$ matrix
$M \in \mathbb R^{m \times n}$ with 
rationally independent rows and
$W := M\mathbb R^n$ satisfying
$r(W) = 0.$ Then $W \in Gr(n,m),$ 
$\overline {P_W(\mathbb Z^m)} = W$
and $\psi :=  \pi_m \circ M : \mathbb R^n \to \mathbb T^m,$ where
$\pi_m(v) := v + \mathbb Z^m,$ is injective and 
$\overline {\psi(\mathbb R^n)} = \mathbb T^m.$ 
$S := \overline {\psi(\Lambda)}$ is a finite union of pairwise disjoint submanifolds each homeomorphic to 
$\mathbb T^k.$ There exists
a measure $\widetilde \mu$ on $\mathbb T^m$ supported on $S$ such that
$\Omega(\mu) = M^TL$ where $L \subset \mathbb Z^m$ is the support of the Fourier transform of $\widetilde \mu.$ 
\begin{remark} 
(\cite{lawton2}, Proposition 6, p. 26) implies that $\widetilde \mu$ is a 
measure on the Pontryagin dual (\cite{rudin},  \S1.7) of 
$\mathbb Z(\Omega(\mu)),$ even if $\mu$ does not have toral type.
\end{remark}
The trace $\widetilde \mu$ can be precisely described even if $S$ lacks smoothness.
There exst $d \in \mathbb Z_+$ 
and subgroups $G_1,...,G_d$ of $\mathbb T^m$  isomorphic to $\mathbb T^k,$
and continuous $\varphi_j : G_j \to \mathbb R^n$ such that
$S = S_1 \cup \cdots \cup S_d$
where
$S_j := \{g + \psi \circ \varphi_j(g) : g \in G_j\}.$
For $j = 1,...,d$ define the lattice subgroup 
$\Gamma_j :=\psi^{-1}(G_j)$ with density 
$\beta_j.$ The restriction of
$\widetilde \mu$ to $S_j$ equals $\beta_j$ 
times the image  of Haar measure on $G_j$ 
under the map $g \to g + \psi \circ \varphi_j(g).$ 
Moreover $\Lambda = \Lambda_1 \cup \cdots \cup \Lambda_d$ where
\begin{equation}\label{eq4.2.1}
\Lambda_j = \{\gamma + \varphi_j \circ \psi(\gamma)
: \gamma \in \Gamma_j \}
\end{equation}
is a BAP perturnation of $\Gamma_j,$ extending
Favorov's result  (\cite{favorov1}, Theorem 1) that shows that every BAP multisubet of $\mathbb R$ with density $\beta$ is
a BAP perturbation of the lattice 
$\beta^{-1} \mathbb Z.$ 
$\mu$ is crystalline iff $\Omega(\mu)$ is discrete 
iff $P_W(L)$ is discrete. $\mu$ is a FQ iff for some integer $t \in \mathbb Z_+$
\begin{equation}\label{eq4.2.3}
	|\Omega(\mu) \cap B(0,R)| = O(R^t), \ \ R \geq 1.
\end{equation}
iff (\ref{eq4.2.3}) holds when $\Omega(\mu)$ is replaced 
by $P_W(L).$
If there exists open $O \subset Gr(k,m)$ containing
$W^\perp$ and $L \cap C(O) = \emptyset$ then 
Proposition \ref{prop2.1.1} implies
(\ref{eq4.2.3}). For $k = 1$ this means that the pair
$(\widetilde \mu, L)$ is a lighthouse defined by Meyer \cite{meyer2} and used by Alon and Kummer \cite{alon3}. Thus the condition above describes a generalized lighthouse. FQ constructed by Lee-Yang varieties arise from generalized lighthouses. We suspect that results in \S2 and \S3 together with the analysis above suggest a yes answer to the following:
\begin{question}\label{que4.2.1}
Do all FQ arise from generalized lighthoueses?
\end{question}
{\bf Acknowledgments} The author thanks Lior Alon and Mario Kummer for intenssive discussions about Fourier quasicrystals.

\end{document}